\newcommand{\ti}{\widetilde}
\newcommand{\la}{\lambda}
\newcommand{\ze}{\zeta}
\newcommand{\ity}{\infty}
\newcommand{\C}{\mathbb{C}}
\newcommand{\N}{\mathbb{N}}
\newcommand{\B}{\Big}
\newcommand{\G}{\mathfrak{G}}
\numberwithin{equation}{section}
\newtheorem{theorem}{Theorem}[section]
\newtheorem{lemma}[theorem]{Lemma}
\newtheorem{corollary}[theorem]{Corollary}
\theoremstyle{remark}
\newtheorem{remark}[theorem]{Remark}
\newtheorem{example}[theorem]{Example}
\newtheorem{definition}[theorem]{Definition}
\begin{document}
\title[semigroups of transcendental functions]{semigroups of transcendental entire functions and their dynamics}
\author[D. Kumar]{Dinesh Kumar}
\address{Department of Mathematics, University of Delhi,
Delhi--110 007, India}

\email{dinukumar680@gmail.com }
\author[S. Kumar]{Sanjay Kumar}

\address{Department of Mathematics, Deen Dayal Upadhyaya College, University of Delhi,
New Delhi--110 015, India }

\email{sanjpant@gmail.com}

\begin{abstract}
We investigate the dynamics of  semigroups of transcendental entire functions using Fatou-Julia theory. Several results of the dynamics associated with iteration of a transcendental entire function have been extended to transcendental  semigroups. We provide some condition for  connectivity of the Julia set of the transcendental semigroups. We also study finitely generated transcendental semigroups, abelian transcendental semigroups and limit functions of transcendental semigroups on its invariant Fatou components.
\end{abstract}

\keywords{Semigroup; normal family; Fatou set; periodic component}

\subjclass[2010]{37F10, 30D05}

\maketitle

\section{introduction}\label{sec1}

Let $f$ be a transcendental entire function and for $n\in\N,$ let $f^n$ denote the $n$-th iterate of $f.$ The set $F(f)=\{z\in\C : \{f^n\}_{n\in\N}\,\text{ is normal in some neighborhood of}\, z\}$ is called the Fatou set of $f$ or the set of normality of $f$ and its complement $J(f)$ is called the Julia set of $f$. An introduction to the basic properties of these sets can be found in \cite{berg1}. The escaping set of $f$ denoted by $I(f)$ is the set of points in the complex plane that tend to infinity under iteration of $f$. 
The set $I(f)$ was studied for the first time by Eremenko \cite {e1} who proved that $I(f)$ is non empty, and each component of $\overline{I(f)}$ is unbounded.

A natural generalization of the dynamics associated to the iteration of a complex function is the dynamics of composite of two or more such functions and this leads to the realm of semigroups of rational and transcendental entire functions. The seminal work in this direction was done by Hinkkanen and Martin \cite{martin} related to semigroups of rational functions. In their  paper, they extended the classical theory of the dynamics  associated to the iteration of a rational function of one complex variable to a more general setting of an arbitrary semigroup of rational functions. Many of the results were extended to semigroups of transcendental entire functions in \cite{{cheng}, {dinesh1}, {poon1}, {zhigang}}. It should be noted that Sumi  has done an extensive work in the semigroup theory of rational functions and holomorphic maps. He has written a series of papers, for instance, \cite{sumi1, sumi2}. 

A transcendental semigroup $G$ is a semigroup generated by a family of transcendental entire functions $\{f_1,f_2,\ldots\}$ with the semigroup operation being functional composition. Denote the semigroup by $G=[f_1,f_2,\ldots].$ Thus each $g\in G$ is a transcendental entire function and $G$ is closed under composition.
The  Fatou set $F(G)$ of a transcendental semigroup $G$, is the largest open subset of $\C$ on which the family of functions in $G$ is normal and the Julia set $J(G)$ of $G$ is the complement of $F(G),$ that is, $J(G)=\ti\C\setminus F(G).$ The semigroup generated by a single function $g$ is denoted by $[g].$ In this case we denote $F([f])$ by $F(f)$ and $J([f])$ by $J(f)$ which are the respective Fatou set and Julia set in the classical Fatou-Julia theory of iteration of a single transcendental entire function.

 The dynamics of a semigroup is more complicated than those of a single function.  For instance, $F(G)$ and $J(G)$ need not be completely invariant and $J(G)$ may not be the entire complex plane $\C$ even if $J(G)$ has an interior point, \cite{martin}. The authors initiated the study of escaping sets of semigroups of transcendental entire functions. They generalized  the dynamics of a transcendental entire function on its escaping set to the dynamics of semigroups of transcendental entire functions on their escaping sets \cite{dinesh3}.

In this paper,  the dynamics of a transcendental entire function on its Fatou set has been generalized to the dynamics of semigroups of transcendental entire functions using Fatou-Julia theory.  We have provided some  condition for  connectivity of the Julia set of the transcendental semigroups. 
We also investigate  finitely generated transcendental semigroups and limit functions of semigroups on its invariant Fatou components. Some of the results have been illustrated by examples.

The following definitions are well known in  transcendental semigroup theory.
\begin{definition}\label{sec1,defn1}
Let $G$ be a transcendental semigroup. A set $W$ is \emph{forward invariant} under $G$ if $g(W)\subset W$ for all $g\in G$ and $W$ is \emph{backward invariant} under $G$ if $g^{-1}(W)=\{w\in\C:g(w)\in W\}\subset W$ for all $g\in G.$ $W$ is called \emph{completely invariant} under $G$ if it is both forward and backward invariant under $G.$ 
\end{definition}
It is easily seen for a transcendental semigroup $G,$ $F(G)$ is forward invariant and $J(G)$ is backward invariant, \cite[Theorem 2.1]{poon1}.

\begin{definition}\label{sec2,defna}
For a transcendental semigroup $G,$  define the \emph{backward orbit} of a point  $w$ to be the set
\[O^-(w)=\{z\in \C:\,\text{there exist}\,g\in G\,\,\text{such that}\,g(z)=w\}.\]
The \emph{Fatou exceptional} value of $G$ is defined by
\[FV(G)=\{w\in\C: O^-(w)\,\text{is finite}\}\] and contains at most one element \cite{poon1}.
\end{definition}

\begin{definition}\label{sec1,defn2}
A component $U$ of $F(G)$ is called a \emph{wandering domain} of $G$ if the set $\{U_g:g\in G\}$ is infinite (where  $U_g$ is the component of $F(G)$ containing $g(U)$). Otherwise, $U$ is called a \emph{pre-periodic} component of $F(G).$
\end{definition}

\section{multiply connected components and the  julia set}\label{sec2}
As this work is in continuation of our previous work \cite{dinesh1}, for the sake of continuity we reproduce some definitions and results of \cite{dinesh1}.
For an invariant component $U\subset F(G),$ the \emph{stabilizer} of $U$ is the set $G_U=\{g\in G:U_g=U\},$ where  $U_g$ denotes  the component of $F(G)$ containing $g(U).$
If $G$ is a transcendental semigroup and $U$  a multiply connected component of $F(G),$ define 
\[
\ti\G_U=\{g\in G: F(g)\,\text{has a multiply connected component}\, \ti U_g\supset U\}.
\]
We reproduce the  proof of non emptiness of $\ti\G_U.$ 

Suppose that $U\subset F(G)$ is a multiply connected component. Then  $U$ is bounded. Let $\gamma\subset U$ be a curve which is not contractible in $U$ and whose interior contains points of $J(G).$ We need to  show the existence of a $g\in G$ such that $J(g)$ intersects the bounded interior portion  of $\gamma$. Denote the bounded interior portion of $\gamma$ by $\gamma_1.$ Let $\zeta_0\in J(G)\cap\gamma_1.$ From \cite[Theorem 4.2]{poon1}, we have \,$J(G)=\overline{(\bigcup_{g\in G}J(g))}.$  It can be easily seen that there is a sequence $\{g_j\}$ in  $G$ such that there are points $\zeta_j\in J(g_j)$ with  $\zeta_j\to\zeta_0.$ Choose a $g_{j_0}$ from this sequence $\{g_j\}$. Since $\gamma\subset U\subset F(g_{j_{0}})$ and $\gamma_1\cap J(g_{j_0})\neq \emptyset,$ $\gamma$ is not contractible with respect to $F(g_{j_0}),$ and hence the component $\ti{U_{g_{j_0}}}$ of $F(g_{j_0})$ which contains $U$ is multiply connected. This proves that $\ti\G_U$ is non empty. 

\begin{remark}\label{sec2,rem1'}
It was shown in \cite{dinesh1} that for a transcendental semigroup $G$, a multiply connected component of $F(G)$ is bounded and wandering, and hence a pre-periodic component of $F(G)$ must be simply connected.
\end{remark}

For a transcendental semigroup $G$, the notion of  escaping set of $G,$ denoted by $I(G)$ was introduced in \cite{dinesh3}, and was   defined as
\[I(G)=\{z\in\C\,|\,\text{every sequence in}\,G\,\text{has a subsequence which diverges to infinity at}\, z\}.\]
\begin{remark}\label{sec2,1'''}
As a consequence of the definition of $I(G),$ one observes that $z\in I(G)$ if every sequence in $G$ diverges to $\ity$ at $z.$
\end{remark}
The next result provides condition for a Fatou component of $G$ to be contained in its escaping set $I(G).$ We will need the following lemma 

\begin{lemma}\cite[Theorem 4.1(i)]{dinesh1}\label{sec2,lem4}
Let $G$ be a transcendental semigroup and $U$  a multiply connected component of $F(G).$ Then for all $g\in\ti\G_U,$ $g^n\to\ity$ locally uniformly on $U.$
\end{lemma}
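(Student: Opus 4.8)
The plan is to deduce the statement directly from the classical theorem of Baker on multiply connected Fatou components of a single transcendental entire function. Recall that if $f$ is transcendental entire and $V$ is a multiply connected component of $F(f)$, then $V$ is bounded and wandering and $f^n\to\ity$ locally uniformly on $V$; this is standard and can be found, for instance, in the survey \cite{berg1}. So the whole point is to transfer this fact to the semigroup setting by means of the auxiliary family $\ti\G_U$ introduced above.

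First I would fix an arbitrary $g\in\ti\G_U$. By the very definition of $\ti\G_U$, the Fatou set $F(g)$ of the single map $g$ has a multiply connected component $\ti U_g$ with $U\subset\ti U_g$. Next I would apply Baker's theorem to $f=g$ and $V=\ti U_g$, which yields $g^n\to\ity$ locally uniformly on $\ti U_g$. Since $U\subset\ti U_g$, every compact subset of $U$ is a compact subset of $\ti U_g$, and hence $g^n\to\ity$ locally uniformly on $U$ as well; as $g$ was an arbitrary element of $\ti\G_U$, this is precisely the assertion of the lemma.

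The only point that genuinely needs attention here — the nearest thing to an obstacle — is making sure that $\ti U_g$ is really a multiply connected \emph{component} of $F(g)$, and not merely a connected subset of $F(g)$ that happens to contain $U$; but that is exactly what is built into the definition of $\ti\G_U$, and the non-emptiness argument reproduced just before Remark \ref{sec2,rem1'} shows that elements $g$ of this type do exist whenever $U$ is multiply connected. One should also observe that, by Remark \ref{sec2,rem1'}, such a component $U$ of $F(G)$ is automatically bounded, so there is no subtlety at $\ity$ and the local uniform convergence on $U$ may be read off in the Euclidean metric.
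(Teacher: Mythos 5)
Your argument is correct and is essentially the intended one: the lemma is quoted here from \cite{dinesh1} without proof, and the proof there is exactly this reduction --- by the definition of $\ti\G_U$ one has $U\subset\ti U_g$ with $\ti U_g$ a multiply connected component of $F(g)$, and Baker's classical theorem then gives $g^n\to\ity$ locally uniformly on $\ti U_g$, hence on $U$. Nothing further is needed.
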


\begin{theorem}\label{sec2,thmc}
Let  $G=[g_1, g_2,\ldots]$  be a transcendental semigroup. 
\begin{enumerate}
\item\ If $U\subset F(G)$ is a multiply connected component, then $U\subset I(G)$;

\item\ If $U\subset F(G)$ is a Baker domain, then $U\subset I(G).$

\end{enumerate}
\end{theorem}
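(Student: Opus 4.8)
For part (1), the plan is to start from the definition of $I(G)$ via arbitrary sequences and reduce to the single-function statement of Lemma~\ref{sec2,lem4}. Fix a multiply connected component $U\subset F(G)$; by the discussion preceding Remark~\ref{sec2,rem1'} we know $\ti\G_U\neq\emptyset$, so pick $h\in\ti\G_U$. Lemma~\ref{sec2,lem4} gives $h^n\to\ity$ locally uniformly on $U$. Now let $\{\phi_k\}$ be an arbitrary sequence in $G$; I want to produce a subsequence diverging to $\ity$ on $U$. The idea is to compose: consider $h^{n}\circ\phi_k$ or, more usefully, note that since $U$ is forward invariant under $G$ (as $F(G)$ is forward invariant and $U$ is wandering, $\phi_k(U)$ lies in some Fatou component $U_{\phi_k}$), one should track where $\phi_k$ sends $U$ and use that the relevant component is again multiply connected. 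The cleanest route: fix a point $z_0\in U$ and a curve $\ga\subset U$ non-contractible in $U$ with $J(G)$ in its bounded complementary component. Then $\phi_k(\ga)\subset U_{\phi_k}$, and by the same argument as in the non-emptiness proof of $\ti\G_U$ (the bounded interior of $\phi_k(\ga)$ still meets $J(G)$, because $J(G)$ is backward invariant so $\phi_k^{-1}$ of a Julia point is Julia), the component $U_{\phi_k}$ is again multiply connected, hence bounded. Thus each $U_{\phi_k}$ is a bounded multiply connected component, and by Remark~\ref{sec2,rem1'} these components are all distinct for distinct images, i.e. $U$ is wandering. The escaping behaviour then follows from a diameter/maximum-principle argument: a multiply connected Fatou component $V$ surrounds points of $J(G)$, and for a transcendental entire map the image of such a $V$ must eventually be large — one shows $\min_{z\in U}|\phi_k(z)|\to\ity$ along a subsequence by combining the growth forced by surrounding the (non-empty) Julia set with Lemma~\ref{sec2,lem4} applied after composing with a suitable $h\in\ti\G_U$, using $\phi_k = (\text{something in } G)$ composed appropriately. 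Concretely, for each $k$ the element $h\circ\phi_k\in G$, and on $U$ one has control of $(h\circ\phi_k)$ via the multiply-connectedness of $U$ and Lemma~\ref{sec2,lem4}-type estimates; letting $k\to\ity$ forces $\phi_k\to\ity$ on $U$.

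The main obstacle I anticipate is exactly this last point: passing from ``$h^n\to\ity$ on $U$ for the fixed generator-type element $h\in\ti\G_U$'' to ``\emph{every} sequence in $G$ diverges to $\ity$ on $U$''. The subtlety is that a general $\phi_k\in G$ need not lie in $\ti\G_U$, so Lemma~\ref{sec2,lem4} does not apply to it directly. The resolution should use that $U$ is wandering with all images multiply connected and bounded: a bounded multiply connected Fatou component of a transcendental entire function is contained in the (fast) escaping set — this is Baker's classical fact, which in the semigroup setting is encoded by the structure of $\ti\G_U$. So I would invoke, for each $\phi_k$, that $\phi_k(U)\subset U_{\phi_k}$ is bounded multiply connected, and that on such a component the minimum modulus is comparable to the maximum modulus (Bohr/Baker-type estimate for functions with no zeros in an annular region), forcing $|\phi_k|$ to be uniformly large on $U$ once the images march off to infinity — and they must, since there are infinitely many distinct bounded multiply connected components and they accumulate only at $\ity$.

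For part (2), suppose $U\subset F(G)$ is a Baker domain, i.e. $U$ is invariant under some $g\in G$ with $g^n|_U\to\ity$. First, by Remark~\ref{sec2,rem1'} a Baker domain, being periodic (indeed invariant), is simply connected. Fix such a $g$ with $g^n\to\ity$ locally uniformly on $U$. Now take an arbitrary sequence $\{\phi_k\}\subset G$; I must show $\phi_k\to\ity$ on $U$. For any $\phi_k$, consider the composition $\phi_k\circ g^{n}\in G$: on $U$, $g^n\to\ity$, and provided $\phi_k$ does not have a finite asymptotic value obstructing this, $\phi_k(g^n(z))\to\ity$. More robustly: fix $z_0\in U$; since $g^n(z_0)\to\ity$ and $\{g^n\}$ is normal on $U$ with limit $\ity$, and since $\{\phi_k\circ g^{m}\}$ lies in $G$ (hence normal on $U\subset F(G)$), one extracts from the double sequence a subsequence diverging to $\ity$; a diagonal argument over $k$ then upgrades this to: every sequence in $G$ diverges to $\ity$ at every point of $U$, which by Remark~\ref{sec2,1'''} is precisely $U\subset I(G)$. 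The only care needed is that normality of $\{\phi_k\}$ on $U$ forbids a finite limit: if some subsequence $\phi_{k_j}\to\psi$ finite on $U$, then evaluating along $g^n(z_0)\to\ity$ and using that $\psi$ would have to be a finite entire limit contradicts the escape of $g^n$ together with the open mapping behaviour — this case is ruled out because $\psi$ constant finite would mean $\phi_{k_j}$ is eventually bounded on a neighbourhood while $g^{n}$ pushes points to $\ity$ where $\phi_{k_j}$, being transcendental entire, is unbounded, a contradiction. I expect part (2) to be the easier of the two; the real work is the modulus estimate in part (1).
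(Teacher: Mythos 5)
Your starting point coincides with the paper's: for (1) you use the non-emptiness of $\ti\G_U$ and Lemma \ref{sec2,lem4} to produce $h\in\ti\G_U$ with $h^n\to\ity$ locally uniformly on $U$; for (2) you use that $U$ sits inside a Baker domain $U^g\subset F(g)$ for $g\in G_U$, so $g^n\to\ity$ on $U$. The paper finishes both parts in one line from there: since $U\subset F(G)$, the family $G$ is normal on $U$, and it concludes (from normality together with the escaping sequence just produced) that every sequence in $G$ has a subsequence tending to $\ity$ locally uniformly on $U$, hence $U\subset I(G)$. You correctly identify that this passage from ``one escaping sequence'' to ``every sequence has an escaping subsequence'' is the crux, but the machinery you substitute for it does not close the gap and contains concrete errors.

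Specifically: (i) your claim that each $U_{\phi_k}$ is multiply connected rests on the bounded interior of $\phi_k(\gamma)$ meeting $J(G)$ ``because $J(G)$ is backward invariant''; that is the wrong direction --- you would need \emph{forward} invariance $\phi_k(J(G))\subset J(G)$, which is precisely the invariance that can fail for semigroups (the paper only guarantees $F(G)$ forward invariant, equivalently $J(G)$ backward invariant). (ii) The decisive step --- that $\min_{z\in U}|\phi_k(z)|\to\ity$ along a subsequence --- is only gestured at via unproved Bohr/Baker-type minimum-modulus estimates and the unjustified assertion that the bounded components $U_{\phi_k}$ ``accumulate only at $\ity$''; moreover, for a fixed sequence $\{\phi_k\}$ these components need not be distinct, so the wandering of $U$ does not by itself push the images off to infinity. (iii) In part (2), your contradiction against a finite limit function $\psi$ evaluates $\phi_{k_j}$ at the points $g^n(z_0)$, which leave every compact subset of $U$; locally uniform convergence on $U$ gives no control of $\phi_{k_j}$ there, and ``transcendental entire hence unbounded'' is not in tension with a finite locally uniform limit on $U$. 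The paper's own justification of the crucial step is admittedly terse (normality alone yields a convergent subsequence, not that the limit must be $\ity$), but your proposal neither reproduces that argument nor supplies a valid replacement, so as written it is a plan with its central step missing rather than a proof.
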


\begin{proof}
\begin{enumerate}
\item\ For  $g\in\ti\G_U,$ let $\ti U_g $ be a multiply connected component of $F(g)$ containing $U.$ Then from Lemma \ref{sec2,lem4},  $g^n\to\ity$ locally uniformly on $U.$ Since $ U\subset F(G)$ therefore, by normality every sequence in $G$ has a subsequence which tends to $\ity$ locally uniformly on $U$ and thus $U\subset I(G).$

\item\ From classification of periodic components of $F(G)$ \cite{dinesh1}, for all $g\in G_U, U\subset U^g\subset F(g),$ where $U^g\subset F(g)$ is a Baker domain and so $U^g\subset I(g)$ for all $g\in G_U.$ This implies that for all $g\in G_U, g^n\to\ity$ on $U$ and hence by normality, $U\subset I(G).$ \qedhere
\end{enumerate}
\end{proof}

We now provide some sufficient condition for the Julia set of a transcendental semigroup to be connected. Some of the results are motivated from Kisaka \cite{kis1}. The connectivity of $J(G)\cup\{\ity\}$ in $\ti\C$ is investigated first. Here compactness of $J(G)\cup\{\ity\}$ in $\ti\C$ simplifies the problem. To prove the result, we state a lemma 

\begin{lemma}\cite[Proposition 5.15]{beardon}\label{sec2,lemc'}
Let $K$ be a compact subset of $\ti\C.$ Then $K$ is connected if and only if each component of the complement $K^c$ is simply connected.
\end{lemma}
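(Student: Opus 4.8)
The plan is to reduce everything to the classical single-component characterization and then to carry out an ``assembly'' over the components of the complement. Recall the standard fact --- established earlier in the same source, and obtainable from the Riemann mapping theorem after handling the point $\ity$, or from Alexander duality --- that an open connected set $\Omega\subsetneq\ti\C$ is simply connected if and only if $\ti\C\setminus\Omega$ is connected. Write $K^c=\bigsqcup_{\alpha}U_\alpha$ for the decomposition of the open set $K^c$ into its components, each open and connected, and note that $\partial U_\alpha\subseteq\partial K\subseteq K$ for every $\alpha$. The cases $K=\ti\C$ and $K=\emptyset$ are trivial, so assume $\emptyset\neq K\neq\ti\C$ (in the intended application $K=J(G)\cup\{\ity\}$ is a nonempty proper compact subset of $\ti\C$).

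For the forward implication I would assume $K$ connected, fix a component $U=U_{\alpha_0}$, and show that $\ti\C\setminus U$ is connected. Since $\ti\C=K\sqcup\bigsqcup_\alpha U_\alpha$ and each $\partial U_\alpha$ lies in $K$,
\[
\ti\C\setminus U\;=\;K\cup\bigcup_{\alpha\neq\alpha_0}U_\alpha\;=\;K\cup\bigcup_{\alpha\neq\alpha_0}\overline{U_\alpha}.
\]
Each $\overline{U_\alpha}$ is connected, is contained in $\ti\C\setminus U$ (as $U_\alpha\cap U=\emptyset$ and $\partial U_\alpha\subseteq K$ misses $U$), and meets $K$ in the nonempty set $\partial U_\alpha$; a union of connected sets each meeting the connected set $K$ is connected, so $\ti\C\setminus U$ is connected and hence $U$ is simply connected.

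For the reverse implication I would argue by contraposition. Suppose $K=A\sqcup B$ with $A,B$ nonempty, compact and disjoint, hence positively separated. The first step is purely set-theoretic: if every component satisfied $\partial U_\alpha\subseteq A$ or $\partial U_\alpha\subseteq B$, then grouping $A$ together with the closures of the ``$A$-type'' components, and $B$ together with the closures of the ``$B$-type'' components, would exhibit $\ti\C$ as a disjoint union of two nonempty closed sets --- closedness using that, since $A$ and $B$ are positively separated, an ``$A$-type'' component cannot cluster at a point of $B$ or conversely --- which contradicts the connectedness of $\ti\C$. Hence there is a \emph{bridging} component $U$ with $\partial U\cap A\neq\emptyset$ and $\partial U\cap B\neq\emptyset$. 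The second step, which I expect to be the real obstacle, is to show that $\ti\C\setminus U$ is disconnected, so that $U$ fails to be simply connected. The concrete route is to separate $A$ from $B$ by a finite system of pairwise disjoint Jordan curves inside $K^c$: cover the compact set $A$ by finitely many small open round disks whose union $W$ satisfies $\overline W\cap B=\emptyset$, put the disks into general position so that $\partial W$ is a finite disjoint union of Jordan curves lying in $K^c$, and then check that this system organizes $\ti\C\setminus U$ into a part ``enclosed'' by $U$ that contains $A$ and a part ``outside'' that contains $B$; the care needed with general position, with the nesting of the curves, and with the fact that $\partial U$ itself may be disconnected is precisely the technical heart of the matter. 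A conceptually cleaner, if heavier, alternative that bypasses all of this is Alexander duality: $\widetilde H_1(\ti\C\setminus K)\cong\widetilde H^0(K)$ (reduced \v{C}ech cohomology), the right-hand side vanishing exactly when $K$ is connected, while $H_1(\ti\C\setminus K)=\bigoplus_\alpha H_1(U_\alpha)$ vanishes exactly when every component $U_\alpha$ is simply connected (each $U_\alpha$ being a planar domain, simple connectivity is detected by $H_1$); this settles both implications at once.
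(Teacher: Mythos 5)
The paper offers no proof of this lemma at all --- it is quoted from Beardon's book as a known fact --- so there is nothing in-paper to compare against and your attempt must be judged on its own. Your forward implication is complete and correct: writing $\ti\C\setminus U=K\cup\bigcup_{\alpha\neq\alpha_0}\overline{U_\alpha}$ and noting that each $\overline{U_\alpha}$ is connected and meets the connected set $K$ in $\partial U_\alpha\neq\emptyset$ shows $\ti\C\setminus U$ is connected, whence $U$ is simply connected by the standard sphere criterion. For the reverse implication you candidly leave the Jordan-curve construction unfinished (``the technical heart''), but the Alexander-duality alternative you give is a complete and correct proof of both directions at once, so the proposal as a whole does establish the lemma, albeit with heavier machinery than necessary.

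What you missed is that the elementary route closes with no Jordan curves, no general position, and no duality, using a tool that is already quoted in this very paper: Lemma \ref{sec2,leme'} (Beardon, Proposition 5.1.4), which says a domain is simply connected if and only if its boundary is connected. Your first step already produces a ``bridging'' component $U$ with $\partial U\cap A\neq\emptyset$ and $\partial U\cap B\neq\emptyset$; since $\partial U\subseteq K=A\sqcup B$ with $A,B$ compact and disjoint, the decomposition $\partial U=(\partial U\cap A)\sqcup(\partial U\cap B)$ separates $\partial U$ into two nonempty closed pieces, so $\partial U$ is disconnected and $U$ is not simply connected --- done. (Alternatively, the paper's Lemma \ref{sec2,lem3} hands you the separating polygon ready-made: it lies in some component $U$ of $K^c$, both of its complementary Jordan domains meet $K\subseteq\ti\C\setminus U$, and that splits $\ti\C\setminus U$.) Either way, the ``real obstacle'' you identify dissolves.
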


\begin{theorem}\label{sec2,thmb'}
Let  $G=[g_1, g_2,\ldots]$  be a transcendental semigroup. Then  $J(G)\cup\{\ity\}$ in $\ti\C$ is connected if and only if $F(G)$ has no multiply connected components.
\end{theorem}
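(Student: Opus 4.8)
The statement is an "if and only if" about compact subsets of $\ti\C$, so the natural tool is Lemma \ref{sec2,lemc'}: $J(G)\cup\{\ity\}$ is connected if and only if every component of its complement $F(G)$ is simply connected. Thus the theorem is essentially a reformulation of the lemma once we know that $F(G)$ is (up to the point at infinity) the complement of $J(G)\cup\{\ity\}$ in $\ti\C$, and that "no multiply connected component" is equivalent to "every component is simply connected."

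For the forward direction I would argue by contraposition. Suppose $F(G)$ has a multiply connected component $U$. By Remark \ref{sec2,rem1'}, $U$ is bounded in $\C$, so in particular $\ity\notin U$ and $U$ is a component of the complement $(J(G)\cup\{\ity\})^c$ in $\ti\C$ that is not simply connected. By Lemma \ref{sec2,lemc'}, since $K=J(G)\cup\{\ity\}$ is a compact subset of $\ti\C$ (being closed, as the complement of the open set $F(G)$, together with the point $\ity$), it follows that $K$ is not connected. This gives the contrapositive of one implication.

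For the reverse direction, assume $F(G)$ has no multiply connected component, i.e. every component of $F(G)$ is simply connected. I need every component of $(J(G)\cup\{\ity\})^c$, viewed in $\ti\C$, to be simply connected. The components of this complement are exactly the components of $F(G)$: indeed $(J(G)\cup\{\ity\})^c = F(G)\setminus\{\ity\} = F(G)$ since $\ity\in J(G)\cup\{\ity\}$ by definition (recall $J(G)=\ti\C\setminus F(G)$, and $F(G)$ is an open subset of $\C$, so $\ity\in J(G)$). Hence each such component $U$ is a simply connected subset of $\C$; I should check it remains simply connected as a subset of $\ti\C$, which holds because a bounded simply connected planar domain is simply connected in $\ti\C$, and an unbounded one is simply connected in $\ti\C$ precisely when its complement in $\ti\C$ is connected — here the relevant fact is that a simply connected proper subdomain of $\C$ has connected complement in $\ti\C$, so it is simply connected as a subset of the sphere. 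Applying Lemma \ref{sec2,lemc'} to $K=J(G)\cup\{\ity\}$ then yields that $K$ is connected.

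The main obstacle is the bookkeeping at $\ity$: one must be careful that "simply connected component of $F(G)$" (a planar notion) matches "simply connected component of the complement in $\ti\C$" (a spherical notion), and that the point $\ity$ is correctly accounted for on the Julia-set side. This is where Remark \ref{sec2,rem1'} does the real work in the forward direction (boundedness of multiply connected components guarantees they are genuine sphere-complement components not touching $\ity$), and a standard planar/spherical simple-connectivity comparison handles the reverse direction. Everything else is a direct application of Lemma \ref{sec2,lemc'}.
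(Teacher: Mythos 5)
Your proof is correct and follows essentially the same route as the paper: both directions reduce to Lemma \ref{sec2,lemc'} applied to the compact set $K=J(G)\cup\{\ity\}$, whose complement in $\ti\C$ is exactly $F(G)$, with Remark \ref{sec2,rem1'} supplying the boundedness of a multiply connected component. If anything, your write-up is a little more careful than the paper's, which dispatches the forward implication by the lemma and then argues the other direction via the remark without spelling out the planar-versus-spherical bookkeeping that you make explicit.
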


\begin{proof}
As $J(G)\cup\{\ity\}$ is a compact subset of $\ti\C$, on applying Lemma \ref{sec2,lemc'} we get the forward implication. From Remark \ref{sec2,rem1'}, the  pre-periodic components of $F(G)$ are simply connected. Hence if a Fatou component $V$ is not simply connected, then $V$ is bounded and wandering  which implies that $J(G)\cup\{\ity\}$ is not connected in $\ti\C.$ This completes the proof.
\end{proof}

\begin{remark}\label{sec2,rem3'}
The theorem generalizes Kisaka's result \cite[Theorem 1]{kis1} to transcendental semigroups.
\end{remark}

As a consequence of Theorem \ref{sec2,thmb'} and \cite[Theorem 4.2, Theorem 4.3]{dinesh1}
 the following result is immediate

\begin{corollary}\label{sec2,corc'}
Let  $G=[g_1, g_2,\ldots]$  be a transcendental semigroup. Under each of the following conditions  $J(G)\cup\{\ity\}$ in $\ti\C$ is connected:
\begin{enumerate}
\item[(i)] $G$ is bounded on some curve $\Gamma$ tending to $\ity,$ (where  by the boundedness of $G$ on a set $A$ we mean,  all the generators in $G$ are bounded on $A$);
\item[(ii)] $F(G)$ has an unbounded component;
\item[(iii)] $F(G)$ has a domain which is completely invariant under some $g_0\in G.$
\end{enumerate}
\end{corollary}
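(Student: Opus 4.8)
The plan is to reduce every one of the three cases to Theorem~\ref{sec2,thmb'}, which identifies connectivity of $J(G)\cup\{\ity\}$ in $\ti\C$ with the absence of multiply connected components of $F(G)$. Thus the whole task becomes: show that, under each of (i), (ii), (iii), no component of $F(G)$ can be multiply connected. The mechanism I would exploit is always the same one packaged in Remark~\ref{sec2,rem1'} and Lemma~\ref{sec2,lem4}: a multiply connected component $U$ is bounded and wandering, there is $g\in\ti\G_U$ with $g^n\to\ity$ locally uniformly on $U$, and the forward iterates $g^n(\ga)$ of a non-contractible curve $\ga\subset U$ are, in the single-function dynamics of $g$, bounded multiply connected curves that eventually encircle arbitrarily large disks while $\min_{z\in g^n(\ga)}|z|\to\ity$. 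Each of (i)--(iii) is incompatible with such a configuration, and the resulting non-existence statements are precisely \cite[Theorem~4.2, Theorem~4.3]{dinesh1}; the corollary is then ``immediate'' in the sense claimed, modulo spelling out which cited statement covers which case.

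For (i) I would take a curve $\Gamma$ tending to $\ity$ on which all generators of $G$ are bounded, assume for contradiction that a multiply connected component $U$ exists with data $\ga$ and $g\in\ti\G_U$ as above, and argue that for $n$ large the curve $g^{n-1}(\ga)$ both encircles the initial compact portion of $\Gamma$ (since it surrounds a huge disk) and fails to reach the far-out part of $\Gamma$ (since it is bounded), forcing $\Gamma$ to meet $g^{n-1}(\ga)$ at some point $w$; then $g(w)\in g^{n}(\ga)$ has modulus tending to $\ity$, contradicting the boundedness hypothesis once it is propagated to $g$ along $\Gamma$. This is the semigroup form of the classical fact that boundedness on a path to $\ity$ rules out multiply connected Fatou components, and is recorded as \cite[Theorem~4.2]{dinesh1}.

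For (ii), if $F(G)$ has an unbounded component $V$ then $V$ is simply connected by Remark~\ref{sec2,rem1'}, and a separate multiply connected component $U$ is excluded because the iterates $g^n(U)$ are all bounded, hence distinct from $V$, while $g^n(\ga)$ encircling arbitrarily large disks would force the unbounded connected set $V$ to cross the curve $g^n(\ga)$, impossible since $F(G)$ is forward invariant and $g^n(\ga)$ lies in a single Fatou component other than $V$. For (iii), a domain $D\subset F(G)$ completely invariant under some $g_0\in G$ is contained in a completely invariant component of $F(g_0)\supseteq F(G)$, which for a transcendental entire function is necessarily unbounded, so $F(G)$ has an unbounded component and case (ii) applies. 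Both implications are contained in \cite[Theorem~4.3]{dinesh1}. Combining these with Theorem~\ref{sec2,thmb'} gives, in each case, that $J(G)\cup\{\ity\}$ is connected in $\ti\C$.

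The main obstacle is not the combinatorial assembly — that part really is routine once Theorem~\ref{sec2,thmb'} is in hand — but case (i): one has to make fully precise the claim that the forward iterates of a non-contractible curve in a multiply connected component eventually surround every point of the plane and escape to $\ity$ uniformly, and then turn ``all generators bounded on $\Gamma$'' into a usable bound on the single map $g\in\ti\G_U$ along $\Gamma$. Since these points are handled in \cite{dinesh1}, here one simply invokes Theorems~4.2 and~4.3 there; a genuinely self-contained treatment would have to reproduce that analysis.
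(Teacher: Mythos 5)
Your proposal is correct and follows the same route as the paper: the paper gives no argument beyond declaring the corollary immediate from Theorem~\ref{sec2,thmb'} together with Theorems~4.2 and~4.3 of \cite{dinesh1}, which is exactly the reduction you carry out. The extra detail you supply (sketching why each hypothesis excludes multiply connected components) goes beyond what the paper records but is consistent with the cited results.
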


\begin{remark}\label{sec2,rem4'}
For a transcendental semigroup $G,$ if $J(G)\cup\{\ity\}$ is disconnected, then all components of $F(G)$ are bounded, some of which are multiply connected components.
\end{remark}

We next consider the connectivity of $J(G)$ in $\C$ itself. To prove the results we will need the following lemmas

\begin{lemma}\cite[Proposition 1]{kis1}\label{sec2,lemd'}
Let $F$ be a closed subset of $\C.$ Then $F$ is connected if and only if the boundary $\partial U$ of each component $U$ of the complement $F^c$ is connected.
\end{lemma}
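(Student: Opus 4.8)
The plan is to prove the two implications separately; the reverse one is elementary point-set topology, while the forward one reduces to a single nontrivial fact of plane topology.

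I will start from the remark that, since $\C$ is locally connected and $F^{c}:=\C\setminus F$ is open, each component $U$ of $F^{c}$ is open and satisfies $\partial U\subset F$ (a point of $\partial U$ lying in $F^{c}$ would lie in a component of $F^{c}$ that meets $U$, hence in $U$ itself). For the implication ``every $\partial U$ connected $\Rightarrow$ $F$ connected'' I will argue by contradiction. Assume $F=F_{1}\cup F_{2}$ with $F_{1},F_{2}$ nonempty, disjoint and closed in $\C$. Since each connected set $\partial U$ is contained in $F_{1}\cup F_{2}$, it lies wholly in $F_{1}$ or wholly in $F_{2}$; this splits the components of $F^{c}$ into two families $\mathcal U_{1}$ and $\mathcal U_{2}$ accordingly. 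Setting $\Omega_{i}=F_{i}\cup\bigcup_{U\in\mathcal U_{i}}U$ for $i=1,2$, one checks at once that $\Omega_{1}$ and $\Omega_{2}$ are nonempty, disjoint, and cover $\C$. The only nontrivial point is that each $\Omega_{i}$ is closed: for $z$ a limit point of $\Omega_{1}$ lying in $F^{c}$ this is clear because $F^{c}$ is open and its component containing $z$ must then belong to $\mathcal U_{1}$, and for $z\in\overline{\Omega_{1}}\cap F_{2}$ one uses that $z$ must be approached by points $z_{n}\in U_{n}\in\mathcal U_{1}$, so the segment $[z_{n},z]$ leaves $U_{n}$ and hence meets $\partial U_{n}\subset F_{1}$ within distance $|z_{n}-z|$ of $z$, forcing $z\in\overline{F_{1}}=F_{1}$, a contradiction. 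Thus $\{\Omega_{1},\Omega_{2}\}$ disconnects $\C$, which is impossible.

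For the implication ``$F$ connected $\Rightarrow$ every $\partial U$ connected'', let $U$ be a component of $F^{c}$ and pass to the Riemann sphere $\ti\C$. The key external input will be the unicoherence of $\ti\C$ (a form of Janiszewski's theorem): if $\ti\C=A\cup B$ with $A$ and $B$ closed and connected, then $A\cap B$ is connected. I will apply this with $B=\overline U$ (closure in $\ti\C$) and $A=\ti\C\setminus U$, so that $A\cup B=\ti\C$ and $A\cap B=\overline U\setminus U=\partial U$; in the one exceptional configuration where $F$ is bounded and $U$ is its unbounded complementary component, I will instead take $A$ to be the compact connected ``filled'' set $\C\setminus U$. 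The substance of the argument is checking that $A$ is connected, and this is where connectedness of $F$ is used: writing the remaining components of $F^{c}$ as $V$, one has $\C\setminus U=F\cup\bigcup_{V\ne U}\overline V$, each $\overline V$ is connected and meets the connected set $F$ along the nonempty set $\partial V\subset F$, so the union is connected, and adjoining $\infty$ (when it belongs to $A$) does not disconnect $A$ since $\infty$ lies in the $\ti\C$-closure of $F$ or of some unbounded complementary component. Unicoherence then yields that $\partial U$ is connected.

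The main obstacle is the forward implication, and specifically the reliance on the unicoherence of $\ti\C$: this is a genuine theorem of plane topology (equivalently Janiszewski's theorem, or the fact that the boundary of a simply connected subdomain of $\ti\C$ is connected), which I would quote from a standard source rather than reprove. Everything surrounding it is routine: the passage between $\C$ and $\ti\C$ and the need to treat the unbounded complementary component of a bounded $F$ by a filled-in variant.
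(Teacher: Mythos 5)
The paper does not prove this lemma at all --- it is quoted verbatim as Proposition~1 of Kisaka's paper \cite{kis1} --- so your proposal can only be judged on its own terms. Your reverse implication is correct and complete: the decomposition $\Omega_i=F_i\cup\bigcup_{U\in\mathcal U_i}U$ and the segment argument showing each $\Omega_i$ is closed do produce a disconnection of $\C$, which is the desired contradiction.

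The forward implication, however, has a genuine gap. Your identity $A\cap B=\overline U\setminus U=\partial U$ (with $B=\overline U$ the closure in $\ti\C$ and $A=\ti\C\setminus U$) is false whenever $U$ is unbounded: then $\infty\in\overline U$ and $\infty\in A$, so $A\cap B=\partial U\cup\{\infty\}$, where $\partial U$ denotes the boundary in $\C$ as in the statement. You noticed and repaired the sub-case where $F$ is bounded and $U$ is the unbounded complementary component, but you missed the case where $F$ and $U$ are \emph{both} unbounded (e.g.\ $F$ a closed half-plane or any unbounded continuum with an unbounded complementary component). In that case unicoherence of $\ti\C$ only yields that $\partial U\cup\{\infty\}$ is connected, and this does not imply that $\partial U$ is connected in $\C$: a priori $\partial U$ could split into two unbounded closed pieces that are glued together only at $\infty$ (exactly as two parallel lines become connected after adjoining $\infty$). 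Ruling that out requires a further argument you do not supply. The clean repair is to stay in $\C$ and invoke unicoherence of the plane itself: with $A=\C\setminus U$ and $B=\overline U$ (closures in $\C$), both sets are closed and connected (your argument that $A=F\cup\bigcup_{V\neq U}\overline V$ is connected goes through unchanged), $A\cup B=\C$, and $A\cap B=\partial U$ exactly, with no case distinction. Since you are already quoting unicoherence of $\ti\C$ as a black box, quoting the equally classical unicoherence of $\R^2$ costs nothing and closes the gap.
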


\begin{lemma}\cite[Proposition 5.1.4]{beardon}\label{sec2,leme'}
A domain $D$ is simply connected if and only if its boundary $\partial D$ is connected.
\end{lemma}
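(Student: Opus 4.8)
The plan is to derive the statement quickly from the two separation lemmas already in hand, Lemma \ref{sec2,lemc'} and Lemma \ref{sec2,lemd'}, after a normalisation that forces the boundary to lie inside $\C$. First I would set aside the trivial case $D=\ti\C$; then, picking any $p\in D$ and applying a M\"{o}bius transformation $T$ with $T(p)=\ity$, I would reduce to the case $\ity\in D$. This reduction is harmless because $T$ is a homeomorphism of $\ti\C$, hence preserves both ``$D$ is simply connected'' and ``$\partial D$ is connected''. Once $\ity\in D$, the set $K:=\ti\C\setminus D=\C\setminus D$ is a compact, in particular closed, subset of $\C$; the set $D_{0}:=D\setminus\{\ity\}=\C\setminus K$ is a domain in $\C$ which, being connected, is the \emph{unique} component of $\C\setminus K$; and since $\ity$ is an interior point of $D$ one has $\overline{D\setminus\{\ity\}}=\overline{D}$ in $\ti\C$ and $\ity\notin\partial D$, whence $\partial_{\C}D_{0}=\partial_{\ti\C}D$.

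Next I would apply the two lemmas to $K$. Lemma \ref{sec2,lemc'}, applied to the compact set $K=\ti\C\setminus D$, says that $K$ is connected if and only if every component of $K^{c}=D$ is simply connected; since $D$ is connected this is exactly the statement that $D$ is simply connected. Lemma \ref{sec2,lemd'}, applied to the closed set $K\subset\C$, says that $K$ is connected if and only if $\partial U$ is connected for every component $U$ of $K^{c}=D_{0}$; since $D_{0}$ is the only such component, this is exactly the statement that $\partial D_{0}$ is connected. Chaining these two equivalences through ``$K$ is connected'' and recalling $\partial D_{0}=\partial D$ yields precisely: $D$ is simply connected if and only if $\partial D$ is connected.

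The one point requiring real care is the normalisation step: one must check that transporting a point of $D$ to $\ity$ alters neither the hypothesis nor the conclusion, and that afterwards the planar boundary of $D\setminus\{\ity\}$ genuinely equals $\partial D$ — this is where interiority of $\ity$ in $D$ is used, and where one must be careful about boundaries taken in $\C$ versus in $\ti\C$. Beyond that the argument is immediate from results already available, so the proof is short; the substance has effectively been relegated to Lemmas \ref{sec2,lemc'} and \ref{sec2,lemd'}.
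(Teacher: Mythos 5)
The paper gives no proof of this statement at all --- it is quoted verbatim from Beardon as a black box --- so there is no argument of the authors' to compare yours with; the only question is whether your derivation is sound, and it is. Reducing by a M\"obius normalisation to the case $\ity\in D$ and then passing through the intermediate condition ``$K=\ti\C\setminus D$ is connected'', with Lemma \ref{sec2,lemc'} supplying the equivalence of that condition with simple connectivity of $D$ (the unique component of $K^c$ in $\ti\C$) and Lemma \ref{sec2,lemd'} supplying its equivalence with connectedness of $\partial_{\C}(D\setminus\{\ity\})$ (the unique component of $\C\setminus K$), is correct, and your identification $\partial_{\C}(D\setminus\{\ity\})=\partial_{\ti\C}D$, which uses that $\ity$ is interior to $D$, is exactly the point that needs checking. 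Two remarks. First, your proof makes explicit something the paper leaves implicit: the equivalence is a statement about subdomains of $\ti\C$ with the boundary taken in $\ti\C$ (for an unbounded planar domain such as $\{|z|>1\}$ with boundary taken in $\C$ it fails); since the paper only applies the lemma to bounded components, where the two boundaries coincide, no harm is done. Second, there is a mild circularity risk in deriving one of Beardon's propositions from an adjacent one together with Kisaka's Proposition 1, whose own proof presumably rests on the same circle of facts; as a formal deduction from results the paper has already placed on record, however, your argument is complete, given that you also dispose of the degenerate case $D=\ti\C$ (empty boundary) separately.
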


\begin{theorem}\label{sec2,thme'}
For a transcendental semigroup  $G=[g_1, g_2,\ldots],$ if all the  components of $F(G)$ are bounded and simply connected, then $J(G)$ is connected in $\C.$ 
\end{theorem}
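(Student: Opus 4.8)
The plan is to apply Lemma~\ref{sec2,lemd'} to the closed set $F = J(G)$ in $\C$, whose complement is $F(G)$. By that lemma, $J(G)$ is connected in $\C$ if and only if the boundary $\partial U$ of every component $U$ of $F(G)$ is connected. So the entire task reduces to showing that each $\partial U$ is connected, given that every component $U$ of $F(G)$ is bounded and simply connected.

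First I would invoke Lemma~\ref{sec2,leme'}: a domain $D$ is simply connected if and only if $\partial D$ is connected. Since every component $U$ of $F(G)$ is by hypothesis a simply connected domain, this lemma immediately gives that $\partial U$ is connected for each such $U$. Combining this with the reduction from Lemma~\ref{sec2,lemd'} completes the argument. In writing this up I would be careful about one point: Lemma~\ref{sec2,leme'} as stated concerns domains in $\C$, and the boundedness hypothesis on the components ensures we are genuinely working in $\C$ (a bounded simply connected domain has connected boundary in $\C$, without any complication from the point at infinity); this is presumably why the hypothesis of boundedness is included even though simple connectivity is doing the main work.

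The main obstacle — really the only subtlety — is making sure the two lemmas are applied to the correct ambient space. Lemma~\ref{sec2,lemd'} is stated for closed subsets of $\C$, and $J(G)$ is closed in $\C$, so that is fine. Lemma~\ref{sec2,leme'} must be read as a statement about simple connectivity and boundary connectivity of plane domains; the boundedness assumption on the Fatou components guarantees there is no issue with an ``unbounded'' component whose closure in $\ti\C$ would behave differently. One should also note that a component of $F(G)$ could in principle be all of $\C$ (if $J(G)=\emptyset$), but then $J(G)$ is trivially connected; and if $J(G)\neq\emptyset$ every component of $F(G)$ is a proper subdomain, to which the lemmas apply directly. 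With these remarks the proof is just: reduce via Lemma~\ref{sec2,lemd'}, then conclude via Lemma~\ref{sec2,leme'}.
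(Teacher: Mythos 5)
Your proposal is correct and follows exactly the paper's own argument: reduce via Lemma~\ref{sec2,lemd'} to showing each $\partial U$ is connected, then apply Lemma~\ref{sec2,leme'} to the bounded simply connected components. The extra remarks on the ambient space and the role of boundedness are sensible but do not change the route.
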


\begin{proof}
In view of Lemma \ref{sec2,lemd'}, it suffices to show that the boundary $\partial U$ is connected for each component $U\subset F(G).$ Since each Fatou component  of $G$ is bounded and simply connected, from Lemma \ref{sec2,leme'}, boundary $\partial U$ is connected for each component $U\subset F(G)$ and this completes the proof.
\end{proof}

\begin{remark}\label{sec2,rem6'}
If $J(G)$ is connected and all Fatou components of $G$ are bounded, then they are also simply connected.
\end{remark}

The following result is immediate from Theorems \ref{sec2,thmb'} and \ref{sec2,thme'}

\begin{corollary}\label{sec2,cori'}
Let  $G=[g_1, g_2,\ldots]$  be a transcendental semigroup. If all components of $F(G)$ are bounded, then $J(G)$ is connected in $\C$ if and only if $J(G)\cup\{\ity\}$ is connected in $\ti\C.$
\end{corollary}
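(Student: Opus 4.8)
The plan is to combine the two characterizations of connectivity that were just proved, using the hypothesis that all Fatou components are bounded to force simple connectivity in one direction and to discharge the hypothesis of Theorem~\ref{sec2,thme'} in the other. Concretely, I would prove the two implications separately.

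For the forward direction, assume $J(G)$ is connected in $\C$ and that every component of $F(G)$ is bounded. By Remark~\ref{sec2,rem6'} (which itself follows from Lemma~\ref{sec2,leme'}: a bounded component whose boundary lies in the connected set $J(G)$ must have connected boundary, hence be simply connected) every component of $F(G)$ is then simply connected. So $F(G)$ has no multiply connected component, and Theorem~\ref{sec2,thmb'} gives that $J(G)\cup\{\ity\}$ is connected in $\ti\C$.

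For the reverse direction, assume $J(G)\cup\{\ity\}$ is connected in $\ti\C$ together with the standing hypothesis that all components of $F(G)$ are bounded. Theorem~\ref{sec2,thmb'} tells us $F(G)$ has no multiply connected component, so all components of $F(G)$ are bounded and simply connected; then Theorem~\ref{sec2,thme'} applies directly and yields that $J(G)$ is connected in $\C$.

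The argument is essentially a bookkeeping exercise once Theorems~\ref{sec2,thmb'} and~\ref{sec2,thme'} are in hand, so there is no serious obstacle; the only point requiring a moment's care is making sure the boundedness hypothesis is genuinely used in both directions — it is what converts ``simply connected'' and ``no multiply connected component'' into equivalent statements and what lets us invoke Theorem~\ref{sec2,thme'}, which has boundedness built into its hypotheses. I would also double-check that adjoining $\{\ity\}$ does not create spurious (dis)connectivity: since all Fatou components are bounded, $\ity$ is an interior point of neither $F(G)$ nor its components, so $\ity$ attaches to $J(G)$ without isolating it, and connectivity of $J(G)$ in $\C$ is not affected by whether we record $\ity$ separately — this is precisely the content made rigorous by comparing Lemmas~\ref{sec2,lemc'} and~\ref{sec2,lemd'}.
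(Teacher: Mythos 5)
Your proposal is correct and follows exactly the route the paper intends: the paper states the corollary as ``immediate from Theorems~\ref{sec2,thmb'} and~\ref{sec2,thme'},'' and your two implications (using Remark~\ref{sec2,rem6'} to get simple connectivity in the forward direction, and Theorem~\ref{sec2,thmb'} followed by Theorem~\ref{sec2,thme'} in the reverse) are precisely the bookkeeping the paper leaves implicit.
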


\begin{theorem}\label{sec2,thmd'}
 For a transcendental semigroup $G=[g_1, g_2,\ldots]$, if $F(G)$ has a multiply connected component $U,$ then for all  $g\in\ti\G_U, J(g)$ is disconnected in $\C.$
\end{theorem}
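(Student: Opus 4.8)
The plan is to translate the statement into a question about $F(g)$ for a fixed $g \in \ti\G_U$ and apply the already-proven connectivity dichotomy for single functions that is implicit in Theorem \ref{sec2,thmb'} (or rather its single-map specialization). First I would fix $g \in \ti\G_U$; by definition of $\ti\G_U$ there is a multiply connected component $\ti U_g$ of $F(g)$ with $U \subset \ti U_g$. In particular $F(g)$ has a multiply connected component. By Baker's classical theorem, a multiply connected component of the Fatou set of a transcendental entire function is bounded and wandering; in particular it is not simply connected and not pre-periodic.

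Next I would invoke Lemma \ref{sec2,lemc'} applied to the single-function Julia set: $J(g) \cup \{\ity\}$ is a compact subset of $\ti\C$, and it is connected if and only if every component of its complement $F(g)$ is simply connected. Since $\ti U_g$ is a component of $F(g)$ that is \emph{not} simply connected, the complement $F(g)$ of $J(g)\cup\{\ity\}$ has a non-simply-connected component, so $J(g)\cup\{\ity\}$ is disconnected in $\ti\C$. Finally, because $\ti U_g$ is bounded, every Fatou component of $g$ that could be responsible for the disconnection is bounded, so $\ity$ lies in a component of $F(g)$ — actually $\ity$ belongs to $F(g)\cup\{\ity\}$ as an interior point of the unbounded Fatou component — and removing the point $\ity$ from the disconnected compact set $J(g)\cup\{\ity\}$ cannot reconnect $J(g)$. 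More carefully: $J(g)\cup\{\ity\}$ disconnected means it splits into two disjoint nonempty closed sets $A, B$; the point $\ity$ lies in exactly one of them, say $B$; then $A \subset J(g)$ is relatively clopen and nonempty, and $J(g)\setminus A = B\setminus\{\ity\}$ is also nonempty (it contains $J(g)$-points near the bounded multiply connected annulus), so $J(g)$ is disconnected in $\C$.

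The only real subtlety is this last topological step — arguing that the disconnection of $J(g)\cup\{\ity\}$ in $\ti\C$ persists after deleting the point at infinity. This is exactly the content of Lemma \ref{sec2,lemd'} together with Lemma \ref{sec2,leme'} applied to a single map: since the bounded multiply connected component $\ti U_g$ is a simply-\emph{un}connected domain, its boundary $\partial \ti U_g$ is disconnected by Lemma \ref{sec2,leme'}, and $\partial \ti U_g \subset J(g)$; then Lemma \ref{sec2,lemd'}, with $F = J(g)$ and $U = \ti U_g$ a component of $J(g)^c = F(g)$, forces $J(g)$ to be disconnected in $\C$. I expect this to be the cleanest route: it avoids any hands-on point-set juggling with $\ity$ and uses only the two boundary lemmas already quoted. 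Since $g \in \ti\G_U$ was arbitrary, the conclusion holds for all $g \in \ti\G_U$, completing the proof.
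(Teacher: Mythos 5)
Your proposal is correct and follows the same route as the paper: by definition of $\ti\G_U$ each $g\in\ti\G_U$ has a multiply connected Fatou component $\ti U_g\supset U$, and hence $J(g)$ is disconnected. The paper leaves that last implication as an unjustified ``therefore,'' whereas you supply the missing justification cleanly via Lemma \ref{sec2,leme'} (the multiply connected, bounded $\ti U_g$ has disconnected boundary) combined with Lemma \ref{sec2,lemd'} applied to $F=J(g)$; this is the right way to close the gap, and is preferable to your first, slightly hand-wavy argument about deleting $\ity$ from $J(g)\cup\{\ity\}$.
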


\begin{proof}
As $F(G)$ has a multiply connected component $U,$  for all $g\in\ti\G_U, F(g)$ will have a multiply connected component $\ti{U_g}\supset U$ and therefore,  $J(g)$ is disconnected in $\C$ for all $g\in\ti\G_U.$
\end{proof}

\begin{remark}\label{sec2,rem5'}
$J(G)$ is disconnected in $\C$ in view of Remark \ref{sec2,rem6'}. Also $I(g)$ is connected for all $g\in\ti\G_U$ by \cite[Theorem 4.6]{dinesh3}.
\end{remark}

Recall that a complex number $w\in\C$ is a critical value of a transcendental entire function $f$ if there exist some $w_0\in\C$ with $f(w_0)=w$ and $f'(w_0)=0.$ Here $w_0$ is called a critical point of $f.$ The image of a critical point of $f$ is  critical value of $f.$ Also  $\zeta\in\C$ is an asymptotic value of a transcendental entire function $f$ if there exist a curve $\Gamma$ tending to infinity such that $f(z)\to \zeta$ as $z\to\ity$ along $\Gamma.$ The set of  asymptotic values of a transcendental entire function $f$ will be denoted by $AV(f).$
The definitions of  critical point, critical value and asymptotic value of a transcendental semigroup $G$ were introduced in \cite{dinesh1}. The set of  asymptotic values of a transcendental semigroup $G$ will be denoted by $AV(G).$

For a transcendental entire function having a multiply connected component, its Fatou set does not contain any asymptotic values \cite{Hua}. This result has a partial extension to transcendental semigroups.

\begin{theorem}\label{sec2,thma'}
Let  $G=[g_1, g_2,\ldots]$  be a transcendental semigroup.
If $U\subset F(G)$ is a multiply connected component, then  $F(\ti\G_U)$ does not contain any asymptotic values of $\ti\G_U.$
\end{theorem}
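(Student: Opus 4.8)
The plan is to reduce the statement to the single-function case supplied just above (for a transcendental entire function whose Fatou set has a multiply connected component, that Fatou set contains no asymptotic values of the function). The bridge is that membership in $\ti\G_U$ is stable under taking iterates, which forces $F(\ti\G_U)$ to sit inside $F(g)$ for every $g\in\ti\G_U$; once that is in place, the asymptotic values of each such $g$ are pushed into $J(\ti\G_U)$, and it only remains to reassemble $AV(\ti\G_U)$ out of the $AV(g)$'s.

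First I would check that $[g]\subseteq\ti\G_U$ for each $g\in\ti\G_U$. Indeed $F(g^n)=F(g)$ for every $n\in\N$, and by definition of $\ti\G_U$ the set $F(g)$ has the multiply connected component $\ti U_g\supset U$; hence $\ti U_g$ is also a multiply connected component of $F(g^n)$ containing $U$, so $g^n\in\ti\G_U$. Consequently the family $\ti\G_U$ contains the iterate family $\{g^n\}_{n\in\N}$, and since any subfamily of a normal family is normal, every point of $F(\ti\G_U)$ has a neighborhood on which $\{g^n\}$ is normal, i.e. $F(\ti\G_U)\subseteq F([g])=F(g)$, equivalently $J(g)\subseteq J(\ti\G_U)$, for all $g\in\ti\G_U$.

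Next, fix $g\in\ti\G_U$. Since $F(g)$ has the multiply connected component $\ti U_g$, the cited single-function result gives $AV(g)\cap F(g)=\emptyset$, that is $AV(g)\subseteq J(g)$; combining with the previous step, $AV(g)\subseteq J(\ti\G_U)$ for every $g\in\ti\G_U$. It then remains to pass from the single-map asymptotic values to $AV(\ti\G_U)$: writing $AV(\ti\G_U)=\overline{\bigcup_{g\in\ti\G_U}AV(g)}$ as in \cite{dinesh1}, and using that $J(\ti\G_U)$ is closed, one gets $AV(\ti\G_U)\subseteq J(\ti\G_U)$, which is precisely the assertion that $F(\ti\G_U)$ contains no asymptotic value of $\ti\G_U$.

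I expect the last step to be the point needing most care, because $\ti\G_U$ need not be closed under composition, so one cannot simply invoke the semigroup identity $J(G)=\overline{\bigcup_{g\in G}J(g)}$ of \cite[Theorem 4.2]{poon1} to describe $J(\ti\G_U)$. What the argument actually uses is only the inclusion $\bigcup_{g\in\ti\G_U}J(g)\subseteq J(\ti\G_U)$ (established in the second paragraph) together with closedness of $J(\ti\G_U)$; so what must be pinned down is exactly how $AV(\cdot)$ is defined for a general subcollection of transcendental entire functions and why applying its closure keeps us inside $J(\ti\G_U)$. (If $AV$ is taken without closure, $AV(\ti\G_U)=\bigcup_{g\in\ti\G_U}AV(g)$, the conclusion is immediate from the third paragraph.) One should also note that the hypothesis of the quoted single-function theorem holds for \emph{every} $g\in\ti\G_U$ by the very definition of $\ti\G_U$ — and it is precisely this restriction from $G$ to $\ti\G_U$ that makes the extension possible only in the partial form stated.
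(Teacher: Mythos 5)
Your proposal is correct and follows essentially the same route as the paper: both reduce to the single-function fact from Hua--Yang that a transcendental entire function whose Fatou set has a multiply connected component has no asymptotic values in its Fatou set, applied to each $g\in\ti\G_U$, and then observe that an asymptotic value of $\ti\G_U$ lying in $F(\ti\G_U)$ would be an asymptotic value of some such $g$ lying in $F(g)$. The only place you go beyond the paper is in explicitly justifying the inclusion $F(\ti\G_U)\subseteq F(g)$ via the closure of $\ti\G_U$ under iteration (using $F(g^n)=F(g)$), a step the paper leaves implicit, and in flagging that $AV(\ti\G_U)$ is to be read as $\bigcup_{g\in\ti\G_U}AV(g)$ (or its closure), which is indeed how the paper uses it.
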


\begin{proof}
 For all $g\in\ti\G_U,$ $U\subset\ti U_g\subset F(g)$ where $\ti U_g\subset F(g)$ is multiply connected. From \cite[p.\ 72]{Hua}, $F(g)$ does not contain any asymptotic values of $g$ for all $g\in\ti\G_U.$ If $z_0\in F(\ti\G_U)$ is an asymptotic value of $\ti\G_U,$ then $z_0$ is an asymptotic value of $g$ for some $g\in\ti\G_U$ a contradiction and hence the result.
\end{proof}

For a transcendental entire function having all its Fatou components bounded, the Fatou exceptional value belongs to the Julia set \cite[p.\ 129]{Hua}. This result gets generalized to the semigroups.

\begin{theorem}\label{sec2,thm4}
Let  $G=[g_1, g_2,\ldots]$  be a transcendental semigroup. If for some $g_0\in G,$ all components of $F(g_0)$ are bounded, then $FV(G)\subset J(G).$
\end{theorem}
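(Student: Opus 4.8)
The plan is to leverage the classical fact (cited as \cite[p.\ 129]{Hua}) that for a single transcendental entire function $g_0$ all of whose Fatou components are bounded, the Fatou exceptional value $FV(g_0)$ lies in $J(g_0)$, together with the relationship between $J(G)$ and the Julia sets of the individual elements of $G$. First I would recall from Definition \ref{sec2,defna} that $FV(G)$ contains at most one element; if $FV(G)=\emptyset$ there is nothing to prove, so I may assume $FV(G)=\{w\}$ where $O^-(w)$ (the backward orbit of $w$ under all of $G$) is finite. The key observation is that the backward orbit of $w$ under the cyclic subsemigroup $[g_0]$ is contained in $O^-(w)$, hence is also finite, so $w$ is a Fatou exceptional value for $g_0$ in the classical sense, i.e.\ $w\in FV(g_0)$ (or the backward orbit under $g_0$ alone is finite, which is precisely what the classical notion requires).

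Next, since all components of $F(g_0)$ are bounded by hypothesis, the classical result from \cite[p.\ 129]{Hua} gives $w=FV(g_0)\in J(g_0)$. Finally, I would invoke the inclusion $J(g_0)\subset J(G)$, which follows from \cite[Theorem 4.2]{poon1} (already used in the excerpt in the form $J(G)=\overline{\bigcup_{g\in G}J(g)}$, so in particular $J(g)\subset J(G)$ for every $g\in G$). Combining, $w\in J(g_0)\subset J(G)$, hence $FV(G)\subset J(G)$.

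The step I expect to require the most care is the first one: verifying that finiteness of the $G$-backward orbit $O^-(w)$ genuinely implies that $w$ is exceptional for the single map $g_0$ in the sense needed to apply \cite{Hua}. One must check that the classical statement in \cite{Hua} uses exactly the condition ``the full backward orbit $\bigcup_{n}g_0^{-n}(w)$ is finite,'' and that this follows from $O^-(w)$ finite because $\{g_0^n : n\in\N\}\subset G$, so every $g_0$-preimage chain of $w$ sits inside $O^-(w)$. A minor subtlety is that if $FV(G)\neq FV(g_0)$ one should note that the hypothesis only needs to be applied to whatever single point $w$ is in $FV(G)$; since $w\in FV(G)$ forces $w$ to have finite backward orbit under every subsemigroup, in particular under $[g_0]$, the argument goes through for that specific $w$. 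Everything else is a direct citation chain, so no lengthy computation is involved.
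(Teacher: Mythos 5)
Your proposal is correct and follows essentially the same route as the paper: both arguments reduce to the observations that $FV(G)\subset FV(g_0)$ (since the $g_0$-backward orbit sits inside $O^-(w)$), that the classical result gives $FV(g_0)\subset J(g_0)$ when all components of $F(g_0)$ are bounded, and that $F(G)\subset F(g_0)$, i.e.\ $J(g_0)\subset J(G)$. The paper merely packages this as a contradiction (a point of $FV(G)\cap F(G)$ would lie in $FV(g_0)\cap F(g_0)$), whereas you argue directly; the content is identical.
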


\begin{proof}
Let $z_0\in FV(G)\cap F(G).$ Then $z_0\in FV(g)\cap F(g)$ for all $g\in G.$ In particular, $z_0\in FV(g_0)\cap F(g_0)$. This contradicts the fact that $F(g_0)$ has no unbounded component  \cite[p.\ 129]{Hua}.
\end{proof}

\section{finitely generated transcendental semigroups}\label{sec3}

We now consider finitely generated transcendental semigroups. A semigroup $G=[g_1,\ldots,g_n]$ generated by finitely many transcendental entire functions is called finitely generated. Furthermore, if $g_i$ and $g_j$ are permutable that is, $g_i\circ g_j=g_j\circ g_i$ for all $1\leq i,j\leq n$, then $G$ is called finitely generated abelian transcendental semigroup.
Recall the
 Eremenko-Lyubich class
 \[\mathcal{B}=\{f:\C\to\C\,\,\text{transcendental entire}: \text{Sing}{(f^{-1})}\,\text{is bounded}\},\]
where Sing($f^{-1}$) is the set of critical values and asymptotic values of $f$ and their finite limit points. Each $f\in\mathcal B$ is said to be of bounded type. Moreover,  if $f$ and $g$ are of bounded type, then so is $f\circ g$ \cite{berg4}. 
A transcendental entire function $f$ is of finite type if Sing($f^{-1}$) is a finite set.

The following result which gives a criterion for the connectedness of the Julia set of a transcendental entire function was proved in \cite{dom1}.
\begin{theorem}\label{sec2,thmi}\cite[Theorem 2.3]{dom1}
For any transcendental entire function $f$ of finite type, $J(f)$ is connected in $\ti\C.$
\end{theorem}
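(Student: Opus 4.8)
The plan is to deduce this from a single structural fact --- a transcendental entire function of finite type has only simply connected Fatou components --- after which the compactness criterion of Lemma~\ref{sec2,lemc'} finishes the argument. First I would note that if $f$ is of finite type then $\text{Sing}(f^{-1})$ is finite, hence bounded, so $f$ belongs to the Eremenko--Lyubich class $\mathcal B$. For functions in $\mathcal B$ one has the inclusion $I(f)\subset J(f)$, so no component of $F(f)$ can meet the escaping set.

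Next I would rule out multiply connected Fatou components. By Baker's structure theorem, any multiply connected component $U$ of $F(f)$ is a bounded wandering domain on which $f^n\to\ity$ locally uniformly, so $U\subset I(f)$; combined with $I(f)\subset J(f)$ this forces $U\subset F(f)\cap J(f)=\emptyset$, a contradiction. (A periodic component, such as a Baker domain, is automatically simply connected by the same theorem.) Hence every component of $F(f)$ is simply connected.

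To conclude, observe that $J(f)\cup\{\ity\}$ is a compact subset of $\ti\C$ whose complement in $\ti\C$ is exactly $F(f)$. Each component $V$ of $F(f)$ is a simply connected subdomain of $\C$, and for such a $V$ the set $\ti\C\setminus V$ is connected, i.e.\ $V$ is simply connected when regarded as a subdomain of $\ti\C$. Thus every component of the complement of $J(f)\cup\{\ity\}$ is simply connected, and Lemma~\ref{sec2,lemc'} shows that $J(f)\cup\{\ity\}$ is connected in $\ti\C$.

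The step requiring the most care is the elimination of multiply connected Fatou components: it rests on combining Baker's theorem on multiply connected wandering domains with the Eremenko--Lyubich inclusion $I(f)\subset J(f)$, and one must pass first from \emph{finite type} to the larger class $\mathcal B$ of \emph{bounded type}, since it is for the latter class that $I(f)\subset J(f)$ is known. An alternative route, closer in spirit to Section~\ref{sec2}, would be to argue directly that for $f\in\mathcal B$ the escaping set has no bounded component, and hence $F(f)$ has no multiply connected one; but invoking the two classical facts above is the most economical path.
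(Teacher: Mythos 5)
Your argument is correct, and it is essentially the route the paper itself intends: the statement is quoted from Dominguez without proof, but the surrounding text (Remark~\ref{sec2,rema} and the proof of Theorem~\ref{sec2,thm9}) shows that the authors have exactly your two ingredients in mind, namely the Eremenko--Lyubich fact that orbits of Fatou points of a function in $\mathcal B$ do not tend to $\ity$ (equivalently $I(f)\subset J(f)$), and Baker's theorem that a multiply connected Fatou component of a transcendental entire function is a bounded wandering domain on which $f^n\to\ity$. The only divergence is in the final topological step: you pass through ``no multiply connected components'' and then invoke the compactness criterion of Lemma~\ref{sec2,lemc'} (as in Theorem~\ref{sec2,thmb'}), whereas the paper's semigroup version, Theorem~\ref{sec2,thm9}, argues by contradiction using the polygon-separation Lemma~\ref{sec2,lem3} to manufacture a multiply connected component directly from a disconnection of the Julia set. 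The two are interchangeable here; your version has the small advantage of isolating the reusable statement that every Fatou component of a class~$\mathcal B$ function is simply connected, while the polygon argument avoids having to check that simple connectivity of a plane domain coincides with simple connectivity as a subdomain of $\ti\C$ (a point you do address correctly via connectivity of $\ti\C\setminus V$).
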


\begin{remark}\label{sec2,rema}
The result can be extended to transcendental entire functions of bounded type. The argument used in the proof of Theorem \ref{sec2,thmi} can be used verbatim to arrive at the conclusion.
\end{remark}

The next result is a generalization of the result mentioned in above remark to transcendental semigroups. We will need the following lemma to prove this result

\begin{lemma}\cite{dom1}\label{sec2,lem3}
If $A$ and $B$ are two components of a closed set $F$ in $\ti\C,$ then there is a polygon in the complement $F^c$ separating $A$ and $B$.
\end{lemma}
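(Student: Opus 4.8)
The plan is to regard this as a statement in the topology of the sphere and to produce the polygon by a grid (mesh) argument. First I would separate $A$ and $B$ at the level of the set itself: since $\ti\C$ is compact and $F$ is closed, $F$ is a compact metric space, and in a compact Hausdorff space the connected components coincide with the quasicomponents. As $A$ and $B$ are distinct components, there is therefore a set $F_1$, relatively clopen in $F$, with $A\subset F_1$ and $B\subset F_2:=F\setminus F_1$. Both $F_1$ and $F_2$ are closed in $F$, hence compact, and they are disjoint. The point $\ity$ lies in at most one of $F_1,F_2$; relabelling if necessary, I assume $\ity\notin F_2$, so that $F_2$ is a compact subset of $\C$.

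Next I would build a polygonal neighbourhood of $F_2$ that avoids $F_1$. Put $\delta=\operatorname{dist}(F_1,F_2)>0$ and fix a square grid on $\C$ of mesh $h$ with $h\sqrt2<\delta$, translated generically so that no point of $F$ lies on a grid line; then every point of $F$ lies in the interior of a unique closed grid square. Let $Q$ be the union of all closed grid squares whose interior meets $F_2$. Since $F_2$ is compact this is a finite union, and one checks immediately that (a) $F_2\subset\operatorname{int}Q$, since each point of $F_2$ lies in an open grid square contained in $Q$; (b) $Q\cap F_1=\emptyset$, because a square meeting $F_2$ has diameter $h\sqrt2<\delta$ and so cannot meet $F_1$; and (c) $\partial Q$ is a finite union of grid edges, hence disjoint from $F$. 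Consequently $\partial Q$ is a finite disjoint union of simple closed polygons.

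To conclude I would pass to the component $Q_0$ of $Q$ containing the connected set $B$; then $B\subset\operatorname{int}Q_0$, $\partial Q_0\subset\partial Q$ is disjoint from $F$, and $A\subset F_1$ is disjoint from $Q_0$. Since $Q_0$ is a connected compact polygonal region, $\ti\C\setminus Q_0$ has finitely many components, each bounded by one of the polygons composing $\partial Q_0$; let $V$ be the component containing the connected set $A$ and put $\gamma=\partial V$. Then $\gamma$ is a polygon contained in $F^c$, and by the Jordan curve theorem $\ti\C\setminus\gamma$ has exactly two components, one of which is $V\supset A$, while $B\subset\operatorname{int}Q_0\subset\ti\C\setminus V$ forces $B$ into the other. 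Thus $\gamma$ separates $A$ from $B$ in $F^c$, as required.

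The steps needing the most care are (c) together with the final extraction: one must choose the generic translation of the grid so that $\partial Q$ misses all of $F$ (not merely $F_2$), and one must deal with possible degree-four vertices of $\partial Q$ (``pinch points'') so that its components really are Jordan polygons and so that each complementary component of $Q_0$ is bounded by exactly one of them; this is handled by a standard local rounding. The only non-elementary ingredient is the identification of components with quasicomponents on the compact set $F$, which legitimises the clopen splitting in the first step; everything afterwards is elementary plane (sphere) topology.
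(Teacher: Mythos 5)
The paper does not prove this lemma at all --- it is quoted from Dom\'inguez's article (and ultimately goes back to Newman's \emph{Elements of the Topology of Plane Sets of Points}), so there is no in-paper argument to compare against. Your mesh construction is exactly the standard route to this statement, and the skeleton is sound: components equal quasicomponents on the compact set $F$, giving the clopen splitting $F=F_1\cup F_2$; a grid neighbourhood $Q$ of the bounded piece $F_2$; and extraction of a single Jordan polygon from $\partial Q_0$ by passing to the complementary component containing $A$. (One small point you gloss over: if $\ity\in F_1$ then ``$\operatorname{dist}(F_1,F_2)$'' must be read as the Euclidean distance from $F_1\cap\C$ to $F_2$, which is still positive because $F_2$ is bounded and $F_1\cap\{|z|\le 2R\}$ is compact and disjoint from $F_2$.)

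There is, however, one step that fails as written. You cannot in general translate the grid ``so that no point of $F$ lies on a grid line'': if $F_2$ is, say, a closed disk, every grid line through it meets $F$, no matter how you translate. Since your definition of $Q$ uses only those squares whose \emph{interiors} meet $F_2$, a point of $F_2$ sitting on a grid edge may then fail to be covered by $Q$ at all, so claim (a) ($F_2\subset\operatorname{int}Q$) collapses, and your verification of (c) also leans on the unavailable genericity. The standard repair needs no genericity: let $Q$ be the union of all \emph{closed} grid squares that \emph{meet} $F_2$. Then every $z\in F_2$ has all of its adjacent squares included, so $F_2\subset\operatorname{int}Q$; and any edge of $\partial Q$ lies in a closed square that misses $F_2$ (hence the edge misses $F_2$) while also lying in a square that meets $F_2$ (hence, by $h\sqrt2<\delta$, the edge misses $F_1$), so $\partial Q\cap F=\emptyset$ automatically. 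With that substitution, together with the local rounding of pinch vertices that you already flag, your final extraction of the separating polygon $\gamma=\partial V$ goes through.
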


\begin{theorem}\label{sec2,thm9}
If $G=[g_1,\ldots, g_n]$  is a finitely generated  transcendental semigroup in which each $g_i,\, i=1,\ldots,n$ is of bounded type, then $J(G)$ is connected in $\ti\C.$
\end{theorem}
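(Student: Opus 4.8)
The plan is to reduce the statement to the single-function case, exploiting that the Julia set of each element of $G$ is connected. I would use the following ingredients, all available above: every element of a finitely generated semigroup is a finite composition of the generators; the class $\mathcal B$ of bounded-type functions is closed under composition \cite{berg4}; Theorem \ref{sec2,thmi}, extended to bounded type as in Remark \ref{sec2,rema}, says a bounded-type transcendental entire function has connected Julia set in $\ti\C$; and $J(G)=\overline{\bigcup_{g\in G}J(g)}$ by \cite[Theorem 4.2]{poon1}. First I would note that each $g\in G$, being a finite composition $g_{i_1}\circ\cdots\circ g_{i_k}$ of bounded-type generators, is itself of bounded type, so $J(g)$ is connected in $\ti\C$. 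Also $F(G)\subset F(g)$ yields $J(g)\subset J(G)$, and $\infty\in J(g)$ for every $g$ (since $g$ is transcendental entire, $F(g)$ is an open subset of $\C$, so $\infty\in\ti\C\setminus F(g)=J(g)$).

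With these in hand the argument is short: $\{J(g):g\in G\}$ is a family of connected subsets of $\ti\C$ all containing the common point $\infty$, hence $\bigcup_{g\in G}J(g)$ is connected, and therefore so is its closure $J(G)$. If one wishes to make the role of Lemma \ref{sec2,lem3} explicit, one can instead argue by contradiction: were $J(G)$ disconnected, take the component $A$ containing $\infty$ and a different component $B$; Lemma \ref{sec2,lem3} then furnishes a polygon $\gamma\subset F(G)$ separating them, so that $A\subset\Omega_1$ and $B\subset\Omega_2$ for the two components $\Omega_1,\Omega_2$ of $\ti\C\setminus\gamma$. Since $\Omega_2\cap J(G)\supset B$ is nonempty and relatively open in $J(G)$ and $\bigcup_{g\in G}J(g)$ is dense in $J(G)$, some $J(g_0)$ meets $\Omega_2$; but $\infty\in J(g_0)\cap\Omega_1$ and $J(g_0)\cap\gamma=\emptyset$ because $\gamma\subset F(G)\subset F(g_0)$, so the connected set $J(g_0)$ would meet both components of $\ti\C\setminus\gamma$, which is impossible.

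I do not anticipate a real obstacle; the proof is essentially a transfer principle. The only steps that carry content are (i) upgrading from the generators to all of $G$, where closure of $\mathcal B$ under composition is exactly what is needed (and is essential, since $G$ contains far more functions than the $g_i$), and (ii) the density relation $J(G)=\overline{\bigcup_{g}J(g)}$ together with the observation that all the $J(g)$ share the point at infinity, which fuses the individually connected Julia sets into a connected whole. The remaining steps are standard point-set topology (a union of connected sets with a common point is connected, and the closure of a connected set is connected).
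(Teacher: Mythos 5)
Your proof is correct, but it follows a genuinely different route from the paper's. The paper never invokes the connectivity of the individual Julia sets $J(g)$: it argues by contradiction that a disconnection of $J(G)$ would (via Lemma \ref{sec2,lem3}) produce a polygon $\gamma\subset F(G)$ separating a component of $J(G)$ from the one containing $\ity$, hence a multiply connected component $U$ of $F(G)$; Lemma \ref{sec2,lem4} then forces $g^n\to\ity$ locally uniformly on $U$ for $g\in\ti\G_U$, contradicting the Eremenko--Lyubich theorem that orbits of points in $F(g)$ do not tend to $\ity$ when $g\in\mathcal B$. You instead transfer connectivity from the single-function level: closure of $\mathcal B$ under composition gives every $g\in G$ bounded type, Remark \ref{sec2,rema} gives each $J(g)$ connected in $\ti\C$, and Poon's identity $J(G)=\overline{\bigcup_{g\in G}J(g)}$ together with the common point $\ity$ fuses these into a connected whole; your contradiction variant with Lemma \ref{sec2,lem3} is structurally closer to the paper's opening but still finishes via connectivity of some $J(g_0)$ rather than via the dynamics on a multiply connected component. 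Your approach buys simplicity (pure point-set topology once the inputs are in place) and a visibly more general statement (it works verbatim for infinitely generated semigroups with bounded-type generators), at the cost of leaning on Remark \ref{sec2,rema}, which the paper asserts without proof; the paper's approach needs only the non-escaping property from \cite{el2} and its own Lemma \ref{sec2,lem4}, and in fact reproves the single-function case as a byproduct rather than assuming it.
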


\begin{proof}
For each $g\in G, g\in\mathcal B$ and so the forward orbit of points in $F(g)$ does not approach $\ity,$ \cite[Theorem 1]{el2}. Suppose that $J(G)$ is not connected in $\ti\C.$ Let $A$ be a component of $J(G)\subset\ti\C$ which is distinct from the component $B$ of $J(G)$ which contains $\ity.$ From Lemma \ref{sec2,lem3}, there is a polygon $\gamma$ in $F(G)$ which separates $A$ and $B$. Let $U\subset F(G)$ be a multiply connected component which contains $\gamma.$ Then from Lemma \ref{sec2,lem4}, for all $g\in\ti\G_U,$ $g^n\to\ity$ locally uniformly on $U$ which is a contradiction and hence the result. 
\end{proof}

\begin{remark}\label{sec2,rem2'}
The conclusion of Theorem \ref{sec2,thm9}, in general, holds for a finitely generated abelian transcendental semigroup $G$ in which $\ity$ is not a limit function of any subsequence in $G$ in a component of $F(G).$
\end{remark}
We illustrate Theorem \ref{sec2,thm9} with an example.
\begin{example}\label{sec2,eg1}
Let $f=e^{\la z},\,\la\in\C\setminus\{0\},$  $g=f^s+\frac{2\pi i}{\la},\,s\in\N$ and  $G=[f, g].$ For $n\in\N,\,g^n=f^{ns}+\frac{2\pi i}{\la}$ and so  $J(g)=J(f).$  Observe that for $ l,m,n,p\in\N, f^l \circ g^m=f^{l+ms}$ and $g^n\circ f^p=f^{ns+p}+\frac{2\pi i}{\la}.$ Therefore for any $h\in G,$ either  $h=f^k$ for some $k\in\N,$ or $h=f^{qs}+\frac{2\pi i}{\la}=g^q$ for some $q\in\N.$ In either of the cases, $J(h)=J(f)$ and hence $J(G)=J(h)$ for all $h\in G.$ From Theorem \ref{sec2,thmi}, $J(h)$ is connected in $\ti\C$ for all $h\in G,$ and hence $J(G)$ is connected in $\ti\C.$
\end{example}

The next result gives a criterion when Fatou set of a transcendental semigroup does not contain any asymptotic values. To prove the result, we will need the following lemma
\begin{lemma}\cite[Theorem 2.1(ii)]{dinesh2}\label{sec2,lem1}
Let $f$ and $g$ be  permutable transcendental entire functions. Then $F(f\circ g)\subset F(f)\cap F(g).$
\end{lemma}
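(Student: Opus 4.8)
The statement to be proved is Lemma~\ref{sec2,lem1} in the form: if $f$ and $g$ are permutable transcendental entire functions, then $F(f\circ g)\subset F(f)\cap F(g)$. The plan is to exploit the permutability to relate the iterates of $f\circ g$ to those of $f$ and of $g$ separately. First I would record the basic normality fact: if $\mathcal F$ is a normal family on an open set $V$ and $h$ is any entire function, then both $\{\phi\circ h:\phi\in\mathcal F\}$ and $\{h\circ\phi:\phi\in\mathcal F\}$ inherit normality on the relevant sets (the latter on $V$, the former on $h^{-1}(V)$ intersected with a neighbourhood where things are defined) — this is the standard observation that post- and pre-composition with a fixed holomorphic map preserves normality, using that $h$ is continuous and, for post-composition, that $\infty$ is handled in $\ti\C$.

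The key computation is that, because $f\circ g=g\circ f$, one has $(f\circ g)^n=f^n\circ g^n$ for every $n\in\N$; this is an easy induction on $n$ using permutability to shuffle the factors. Now take $z_0\in F(f\circ g)$, so $\{(f\circ g)^n\}$ is normal on some neighbourhood $W$ of $z_0$. To show $z_0\in F(g)$: consider the family $\{f^n\circ g^n\}=\{(f\circ g)^n\}$, which is normal on $W$; I want to deduce normality of $\{g^n\}$ on $W$. This is the delicate direction, since ``stripping off'' the outer $f^n$ is not automatic — it is here that one must use that $f$ is transcendental entire and invoke a result on the structure of the Julia/Fatou sets (e.g. that the family $\{f^n\circ \psi_n\}$ normal together with suitable control forces $\{\psi_n\}$ normal, or argue via the Julia set inclusion $J(g)\subset J(f\circ g)$ directly). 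In fact the cleaner route is to prove the complementary inclusion for Julia sets: show $J(f)\cup J(g)\subset J(f\circ g)$, equivalently $F(f\circ g)\subset F(f)\cap F(g)$. For $J(g)\subset J(f\circ g)$: if $z_0\notin J(f\circ g)$ then $\{f^n\circ g^n\}$ is normal near $z_0$; passing to a locally uniformly convergent subsequence $f^{n_k}\circ g^{n_k}\to h$ (possibly $\equiv\infty$), and using that $f^{n_k}$ has no finite asymptotic value obstructing the argument, one shows $\{g^{n_k}\}$ cannot behave wildly — more precisely, one uses Montel's theorem together with the fact that $f^n$ omits at most one value to conclude $\{g^{n_k}\}$ is itself normal near $z_0$.

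Symmetrically, for $J(f)\subset J(f\circ g)$ I would write $(f\circ g)^n=g^n\circ f^n$ (again by permutability) and run the same argument with the roles of $f$ and $g$ interchanged. Combining, $J(f)\cup J(g)\subset J(f\circ g)$, which is exactly $F(f\circ g)\subset F(f)\cap F(g)$. The main obstacle, as flagged above, is the ``cancellation'' step: deducing normality of $\{g^{n_k}\}$ from normality of $\{f^{n_k}\circ g^{n_k}\}$. The right tool is Montel's theorem applied with the observation that each $f^{n_k}$, being a transcendental entire function, is an open map omitting at most one finite value, so the preimages $f^{-n_k}$ of a suitable small disc around a regular value of the limit map $h$ give a family of inverse branches with which one can pull back the normality; alternatively one cites the known lemma (valid for entire $f$) that $J(\psi)\subset J(f\circ\psi)$ whenever $\psi$ is entire and $f$ is transcendental entire, which is essentially this cancellation packaged. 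I expect the write-up to be short once that lemma is invoked, with permutability entering only to supply the identities $(f\circ g)^n=f^n\circ g^n=g^n\circ f^n$.
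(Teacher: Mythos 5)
First, a point of context: the paper does not prove this lemma at all --- it is imported verbatim from \cite[Theorem 2.1(ii)]{dinesh2}, so there is no in-text proof to compare against. Judged on its own merits, your proposal correctly isolates the only real content of the statement (the ``cancellation'' step) but does not close it. The identity $(f\circ g)^n=f^n\circ g^n=g^n\circ f^n$ is fine, but deducing normality of $\{g^{n_k}\}$ from normality of $\{f^{n_k}\circ g^{n_k}\}$ is not achievable by the tools you name. The limit of $(f\circ g)^{n_k}$ on a Fatou component is typically a \emph{constant}, so there is no ``regular value of the limit map'' around which to take inverse branches; the inverse branches of $f^{n_k}$ are infinitely multivalued with no canonical choice recovering $g^{n_k}$; and ``$f^{n_k}$ omits at most one value'' gives no control, since $f^{n_k}$ can collapse wildly different behaviours of $g^{n_k}$ (e.g.\ if $g^{n_k}$ maps into an attracting basin of $f$). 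Worse, the ``known lemma'' you propose to cite --- $J(\psi)\subset J(f\circ\psi)$ for arbitrary entire $\psi$ and transcendental entire $f$ --- is false without permutability: take $\psi=\exp$ (so $J(\psi)=\C$) and $f(w)=\tfrac14\sin(w-1)$; then $f\circ\psi$ has an attracting fixed point at $0$, so $J(f\circ\psi)\neq\C$. With permutability it is precisely the statement to be proved, so citing it is circular. A related soft spot: your opening claim that post-composition with a fixed entire $h$ preserves normality ``handling $\ity$ in $\ti\C$'' is wrong for transcendental $h$, since $\ity$ is an essential singularity ($\phi_k(z)=kz\to\ity$ on an annulus, yet $e^{kz}$ is not normal there); this is exactly the difficulty your cancellation step runs into.

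The standard way to prove the lemma is different and genuinely uses a nontrivial external input, namely the theorem of Bergweiler and Wang \cite{berg4}: for entire $f,g$ one has $z\in J(f\circ g)$ if and only if $g(z)\in J(g\circ f)$. When $f$ and $g$ are permutable, $f\circ g=g\circ f=:h$, so this says $g^{-1}(J(h))=J(h)$ and likewise $f^{-1}(J(h))=J(h)$; in particular $J(h)^{c}$ is forward invariant under $g$, so every $g^{n}$ omits on $F(h)$ all the (infinitely many) values of $J(h)$, and Montel's theorem gives $F(h)\subset F(g)$; symmetrically $F(h)\subset F(f)$. The permutability enters through Bergweiler--Wang rather than through the identity $(f\circ g)^n=f^n\circ g^n$, and the analytic difficulty you flagged is absorbed into their theorem rather than resolved by an omitted-value count. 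As written, your argument has a genuine gap at its central step.
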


\begin{theorem}\label{sec2,thm3}
For a finitely generated abelian transcendental semigroup $G=[g_1,\ldots, g_n],$ if every Fatou component of   $g_i,\, i=1,\ldots,n$ is bounded, then $F(G)$ does not contain any asymptotic values of $G.$
\end{theorem}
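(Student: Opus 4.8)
The plan is to reduce the statement to a property of single transcendental entire functions, pushing the hypothesis through permutability via Lemma~\ref{sec2,lem1}; the argument runs parallel to the proof of Theorem~\ref{sec2,thma'}. I would begin with two standard reductions. First, since the family $G$ is normal on $F(G)$, so is $\{g^{n}\}_{n\in\N}$ for each fixed $g\in G$ (these iterates all lie in $G$), whence $F(G)\subset F(g)$, equivalently $J(g)\subset J(G)$, for every $g\in G$. Second, by the definition of the asymptotic values of a semigroup from \cite{dinesh1} (the convention used in the proof of Theorem~\ref{sec2,thma'}), every asymptotic value of $G$ is an asymptotic value of some element $g\in G$. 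Consequently, if I can show that $AV(g)\subset J(g)$ for every $g\in G$, the theorem follows: a point $z_{0}\in F(G)\cap AV(G)$ would satisfy $z_{0}\in AV(g)\subset J(g)$ for some $g\in G$, while simultaneously $z_{0}\in F(G)\subset F(g)$, a contradiction.

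The next step is to show that, under the hypotheses, every $g\in G$ has all its Fatou components bounded. Write $g$ as a finite word $g=g_{i_{1}}\circ g_{i_{2}}\circ\cdots\circ g_{i_{m}}$ in the generators. If $m=1$ there is nothing to prove. If $m\ge 2$, set $h=g_{i_{2}}\circ\cdots\circ g_{i_{m}}\in G$; since the generators pairwise commute, repeated use of the commutation relations shows $g_{i_{1}}$ and $h$ commute, so Lemma~\ref{sec2,lem1} gives $F(g)=F(g_{i_{1}}\circ h)\subset F(g_{i_{1}})\cap F(h)\subset F(g_{i_{1}})$. Hence each component of $F(g)$ is contained in a component of $F(g_{i_{1}})$, which is bounded by hypothesis, so every component of $F(g)$ is bounded.

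It then remains to establish the single-function fact: if $f$ is a transcendental entire function all of whose Fatou components are bounded, then $AV(f)\subset J(f)$. I would prove this by an asymptotic-tract argument. Suppose, for contradiction, that $w\in AV(f)\cap F(f)$, let $U$ be the component of $F(f)$ containing $w$, and fix $\varepsilon>0$ with $\overline{D(w,\varepsilon)}\subset U$; by hypothesis $U$ is bounded. Let $\gamma$ be an asymptotic path for $w$, so $\gamma(t)\to\ity$ and $f(\gamma(t))\to w$ as $t\to\ity$; for $t$ large the tail of $\gamma$ lies in $f^{-1}(D(w,\varepsilon))$. Let $\Omega$ be the connected component of $f^{-1}(D(w,\varepsilon))$ containing that tail. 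Then $\Omega$ is open, connected and unbounded, and $f(\Omega)\subset D(w,\varepsilon)\subset U\subset F(f)$; since $F(f)$ is completely invariant under $f$, $\Omega\subset F(f)$, so $\Omega$ lies inside a single Fatou component of $f$, which is therefore unbounded --- contradicting the hypothesis. Together with the two reductions above, this completes the proof. (If this single-function statement is already on record --- compare the use of \cite{Hua} for the analogous fact about multiply connected components in Theorem~\ref{sec2,thma'} --- one may simply cite it.)

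The only genuinely new ingredient is the single-function lemma, so that is where the care goes: one must check that the tract $\Omega$ is an unbounded \emph{connected} set (it contains an unbounded subarc of $\gamma$ and is a component of an open set) and that it is $f(\Omega)$, not $\Omega$ itself, that lies in $D(w,\varepsilon)$, so that it is the complete invariance of $F(f)$ for the single map $f$ that forces $\Omega\subset F(f)$. The remaining ingredients --- $F(G)\subset F(g)$, the meaning of $AV(G)$, and the boundedness of the Fatou components of an arbitrary $g\in G$ via Lemma~\ref{sec2,lem1} and permutability --- are routine.
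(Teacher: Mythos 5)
Your proposal is correct and follows essentially the same route as the paper: use Lemma~\ref{sec2,lem1} together with permutability to show every $g\in G$ has all Fatou components bounded, then derive a contradiction from a point $z_0\in F(G)\cap AV(G)$ lying in $F(h)\cap AV(h)$ for some $h\in G$. The only difference is that the paper simply cites \cite[p.~72]{Hua} for the single-function fact that a transcendental entire function with only bounded Fatou components has no asymptotic values in its Fatou set, whereas you prove it directly via the asymptotic-tract argument (which is sound).
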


\begin{proof}
 For all $g\in G,$ every Fatou component of $g$ is bounded  using  Lemma \ref{sec2,lem1}. Also  every component of $F(G)$ is bounded. Suppose that $z_0\in F(G)\cap AV(G).$ Then $z_0\in F(g)$ for all $g\in G$ and $z_0\in AV(h)$  for some $h\in G$ and hence $z_0\in F(h)\cap AV(h)$ for some $h\in G.$ But this contradicts the fact that $F(h)$ does not contain any asymptotic values of $h$ \cite[p.\ 72]{Hua}.
\end{proof}

Recall that a fixpoint $w_0$ of a meromorphic function $f$ is called \emph{weakly repelling} if $|f'(w_0)|\geq 1$ \cite{sis}. It was shown in \cite{berg2} that a transcendental entire function having finitely many weakly repelling fixed points has no multiply connected components. This result gets generalized to  transcendental  semigroups.

\begin{theorem}\label{sec2,thm5}
Let  $G=[g_1,\ldots, g_n]$  be a finitely generated  transcendental semigroup. If $G$ has only finitely many weakly repelling fixed points then $G$ has no multiply connected components.
\end{theorem}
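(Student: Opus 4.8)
The plan is to reduce the semigroup statement to the single-function theorem of Bergweiler \cite{berg2} by exhibiting, for every multiply connected component of $F(G)$, a single function $g\in G$ whose Fatou set also has a multiply connected component, together with a control on the weakly repelling fixed points of that $g$. Suppose, for contradiction, that $F(G)$ has a multiply connected component $U$. By the argument reproduced after Definition \ref{sec1,defn2} (non-emptiness of $\ti\G_U$), there exists $g\in\ti\G_U$, i.e.\ a function $g\in G$ such that $F(g)$ has a multiply connected component $\ti U_g\supset U$. So $g$ is a single transcendental entire function whose Fatou set has a multiply connected component.

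Now I would invoke the contrapositive of Bergweiler's result \cite{berg2}: since $F(g)$ has a multiply connected component, $g$ must have infinitely many weakly repelling fixed points. The key step is then to transfer these back into the semigroup. Here one uses the finite generation crucially. Since $G=[g_1,\dots,g_n]$ is finitely generated, write $g=g_{i_1}\circ g_{i_2}\circ\cdots\circ g_{i_k}$ as a word in the generators. A fixed point $w_0$ of $g$ with $|g'(w_0)|\ge 1$ is a point of period (dividing) the length of the word under the action of the generators; by the chain rule, $g'(w_0)=\prod_{j} g_{i_j}'(\,\cdot\,)$ evaluated along the orbit. The aim is to produce, from infinitely many weakly repelling fixed points of $g$, infinitely many weakly repelling fixed points of some single generator or more carefully of $G$ in the sense that the paper defines a fixed point of a semigroup. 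If "$w_0$ is a weakly repelling fixed point of $G$" means there exists $h\in G$ with $h(w_0)=w_0$ and $|h'(w_0)|\ge 1$, then each weakly repelling fixed point of $g$ is automatically a weakly repelling fixed point of $G$ (take $h=g$), and since $g$ has infinitely many of them, $G$ has infinitely many, contradicting the hypothesis. This closes the argument.

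I would write it in that clean form: the only subtlety is making sure the paper's notion of "weakly repelling fixed point of $G$" is the one that makes the reduction immediate, namely that $w_0$ is a weakly repelling fixed point of $G$ iff $w_0$ is a weakly repelling fixed point of some $g\in G$; with that reading the proof is essentially a two-line transfer. If instead the intended notion is a \emph{common} fixed point of all of $G$ with some product bound, then finite generation is needed to bound the orbit lengths and one would have to argue that the infinitely many weakly repelling fixed points of $g$ cannot all be "absorbed" — but for a finitely generated semigroup the simpler reading is the natural one and is what the single-function analogue in \cite{berg2} suggests.

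\begin{proof}
Suppose, on the contrary, that $F(G)$ has a multiply connected component $U.$ Then, as shown in the discussion following Definition \ref{sec1,defn2}, $\ti\G_U\neq\emptyset;$ choose $g\in\ti\G_U.$ By definition of $\ti\G_U,$ $F(g)$ has a multiply connected component $\ti U_g\supset U.$ Thus $g$ is a transcendental entire function whose Fatou set has a multiply connected component, and hence by \cite{berg2} $g$ has infinitely many weakly repelling fixed points. Every fixed point $w_0$ of $g$ with $|g'(w_0)|\geq 1$ is a weakly repelling fixed point of $G,$ since $g\in G.$ Therefore $G$ has infinitely many weakly repelling fixed points, contradicting the hypothesis. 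Hence $G$ has no multiply connected components.
\end{proof}
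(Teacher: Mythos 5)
Your proof is correct and follows essentially the same route as the paper: both reduce to the Bergweiler--Terglane result via the non-emptiness of $\ti\G_U$ and the observation that a weakly repelling fixed point of an element $g\in G$ is one of $G$. The paper phrases it in the forward direction (finitely many for $G$ implies finitely many for each $g$, hence no multiply connected component of any $F(g)$), while you argue by contraposition, but the content is identical, and your reading of ``weakly repelling fixed point of $G$'' matches the one the paper implicitly uses.
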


\begin{proof}
As $G$ has only finitely many weakly repelling fixed points, each $g\in G$ can have at most finitely many weakly repelling fixed points. So for each $g\in G, F(g)$ has no multiply connected components \cite{berg2}. If $U\subset F(G)$ is a multiply connected component, then for all $g\in\ti\G_U,\, F(g)$ has a multiply connected component $\ti U_g$ which contains $U$, which is a contradiction and this completes the proof of the theorem.
\end{proof}

If a transcendental entire function is bounded on some curve tending to $\ity,$ then all its Fatou components are simply connected \cite[Corollary]{baker2}. This result gets generalized to  transcendental semigroups as seen by the following theorem

\begin{theorem}\label{sec2,thm6}
Let  $G=[g_1,\ldots, g_n]$  be a finitely generated  transcendental semigroup.  Suppose that $\Gamma_i,\,1\leq i\leq n$ are curves going to $\ity$ and $g_i\vert_{\Gamma_i}$ are bounded for $1\leq i\leq n$, then all the components of $F(G)$ are simply connected.
\end{theorem}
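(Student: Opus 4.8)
The plan is to argue by contradiction: suppose $F(G)$ has a multiply connected component $U$. By Remark~\ref{sec2,rem1'} it is bounded, and running the non-emptiness argument for $\ti\G_U$ reproduced in Section~\ref{sec2} we may fix $g\in\ti\G_U$ together with a Jordan curve $\gamma\subset U$ whose bounded interior meets $J(g)$; in particular $\gamma$ is not contractible in the multiply connected component $\ti U_g$ of $F(g)$ containing $U$. Write $g$ as a word $g=g_{i_m}\circ g_{i_{m-1}}\circ\cdots\circ g_{i_1}$ in the generators and choose $M>0$ with $|g_{i_m}|\le M$ on $\Gamma_{i_m}$. Since $\ti U_g$ is a multiply connected component of the single transcendental entire function $g$, Baker's theory of such components (cf.\ \cite{baker2}; compare Lemma~\ref{sec2,lem4}) yields: for every sufficiently large $n$ the curve $g^{n}(\gamma)$ is a bounded closed curve separating $0$ from $\ity$, with $\min_{z\in g^{n}(\gamma)}|z|\to\ity$ as $n\to\ity$.

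Next I would fix $n$ so large that $g^{n}(\gamma)$ separates $0$ from $\ity$ and $\min_{z\in g^{n}(\gamma)}|z|>|p|$, where $p$ denotes the initial point of $\Gamma_{i_m}$. Then $p$ lies in the bounded complementary component $\Omega$ of $g^{n}(\gamma)$ that contains $0$, whereas $\Gamma_{i_m}$ eventually escapes every bounded set; hence $\Gamma_{i_m}$ must meet $g^{n}(\gamma)$, say at a point $w=g^{n}(v)$ with $v\in\gamma$. As $w\in\Gamma_{i_m}$ we get $|g_{i_m}(w)|\le M$, i.e.\ $|\phi_{n}(v)|\le M$ with $\phi_{n}:=g_{i_m}\circ g^{n}\in G$. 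Thus for every large $n$ there is a point $v_{n}$ of the compact set $\gamma\subset U\subset F(G)$ with $|\phi_{n}(v_{n})|\le M$. But by Theorem~\ref{sec2,thmc}(1), $U\subset I(G)$, and (as in the proof of that theorem) normality of $G$ on $U$ forces some subsequence $\{\phi_{n_k}\}$ of $\{\phi_{n}\}\subset G$ to tend to $\ity$ uniformly on the compact curve $\gamma$, so $\min_{\gamma}|\phi_{n_k}|\to\ity$; this contradicts $|\phi_{n_k}(v_{n_k})|\le M$. Hence $F(G)$ has no multiply connected component, i.e.\ every component of $F(G)$ is simply connected.

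The step I expect to be the main obstacle is importing Baker's structure theorem in exactly the form used above --- that the curves $g^{n}(\gamma)$ genuinely wind around a fixed point (here $0$) and recede to $\ity$ --- since this is what forces the \emph{fixed} curve $\Gamma_{i_m}$, along which $g_{i_m}$ is bounded, to cross $g^{n}(\gamma)$. The remaining ingredients --- the reduction to a single $g\in\ti\G_U$ and the final escaping-set/normality contradiction, which works precisely because $g_{i_m}\circ g^{n}$ again belongs to $G$ --- are routine. An alternative route would be to prove directly that the composite $g=g_{i_m}\circ(g_{i_{m-1}}\circ\cdots\circ g_{i_1})$ is itself bounded on some curve tending to $\ity$ (by lifting $\Gamma_{i_m}$ through the inner map) and then to apply \cite[Corollary]{baker2} to $g$; but guaranteeing that such a lift tends to $\ity$ rather than merely being unbounded is itself delicate, so the argument through $I(G)$ above seems preferable.
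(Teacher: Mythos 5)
Your proof is correct, but it takes a substantially longer route than the paper's. The paper's argument is a two-line reduction: every element $g=g_{i_m}\circ\cdots\circ g_{i_1}$ of $G$ is already bounded on one of the given curves, namely $\Gamma_{i_1}$ attached to the \emph{innermost} letter, because $g_{i_1}(\Gamma_{i_1})$ is a bounded set and the remaining entire factors map bounded sets to bounded sets; hence by \cite[Corollary]{baker2} every component of $F(g)$ is simply connected for every $g\in G$, and a multiply connected component $U$ of $F(G)$ is then impossible since $\ti\G_U\neq\emptyset$ would produce some $g$ with a multiply connected Fatou component. You explicitly considered and rejected the ``bounded on a curve'' route, but only in the form of lifting $\Gamma_{i_m}$ (the outermost letter) through the inner map, which is indeed delicate; the point you missed is that no lifting is needed if you use the innermost letter's curve. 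Your own argument --- importing Baker's structure theorem that the curves $g^n(\gamma)$ eventually surround $0$ and recede to $\ity$, forcing intersections with $\Gamma_{i_m}$, and then contradicting $U\subset I(G)$ via the elements $g_{i_m}\circ g^n\in G$ --- is sound and essentially re-proves Baker's Corollary inside the semigroup setting; its only advantage is that it uses boundedness of a single generator, at the cost of invoking a heavier structural result and a longer normality argument.
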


\begin{proof}
Observe that each $g\in G$ is bounded on one of the curves $\Gamma_i, \,1\leq i\leq n$. Thus all components of $F(g)$ are simply connected for each $g\in G,$  \cite[Corollary]{baker2}. It now follows that all components of $F(G)$ are simply connected.
\end{proof}

\begin{remark}\label{sec2,rem2}
The result, in general, holds for a transcendental semigroup $G=[g_1, g_2,\ldots]$ and a family of curves $\Gamma=\{\Gamma_i:i\in\N\}$ in which each $\Gamma_i$  tends to $\ity$ and each of the generators $g_i,\,i\in\N$ are bounded on some curve in $\Gamma.$
\end{remark}

\section{results on limit functions}

Recall that for a transcendental meromorphic function $g,$ a function $\psi(z)$ is a limit function of $\{g^n\}$ on a  component $U\subset F(g)$ if there is some subsequence of $\{g^n\}$ which converges locally uniformly on $U$ to $\psi$ \cite[p.\ 61]{Hua}. Denote by $\mathfrak{L}(U)$ all such limit functions. We now give the analogous definition of limit functions for a semigroup $G.$

\begin{definition}\label{sec2,defn1}
Let $G=[g_1,g_2,\ldots]$ be a transcendental semigroup and let $U\subset F(G)$ be a Fatou component. A function $\psi(z)$ is a limit function of $G$ on $U$ if every sequence   in $G$ has some subsequence which converges locally uniformly on $U$ to $\psi.$ Denote again by $\mathfrak{L}(U)$ all such limit functions.
\end{definition}

We next prove an important  lemma
\begin{lemma}\label{sec2,lem2} 
Let $G=[g_1,g_2,\ldots]$ be a transcendental semigroup and $U\subset F(G)$ be an invariant component. Then $\mathfrak L(U)\subset\mathfrak L(U^g),$ where  for all $g\in G_U,$ $U^g\subset F(g)$ is an invariant component containing $U.$
\end{lemma}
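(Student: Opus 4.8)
The plan is to show that any limit function $\psi$ of the whole semigroup $G$ on the invariant component $U$ is, in particular, a limit function of the cyclic subsemigroup $[g]$ on an invariant component $U^g \subset F(g)$ that contains $U$, for each $g \in G_U$. First I would fix $g \in G_U$ and recall from the classification of periodic (invariant) components of $F(G)$ used in \cite{dinesh1} that, since $U$ is invariant under $G$, we have $U \subset U^g$ for some invariant component $U^g$ of $F(g)$; this $U^g$ is exactly the component of $F(g)$ containing $g(U)$, and invariance of $U$ under $g$ (being an element of $G_U$) guarantees $U^g$ is $g$-invariant. This is the structural fact I would cite rather than reprove.

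Next, let $\psi \in \mathfrak{L}(U)$. By Definition \ref{sec2,defn1}, every sequence in $G$ has a subsequence converging locally uniformly on $U$ to $\psi$. Apply this to the particular sequence $\{g^n\}_{n \in \N}$, which lies in $G$: there is a subsequence $\{g^{n_k}\}$ converging locally uniformly on $U$ to $\psi$. Since $U \subset U^g$ and $U^g \subset F(g)$, and $\{g^n\}$ is a normal family on $F(g)$, I would pass to a further subsequence of $\{g^{n_k}\}$ that converges locally uniformly on all of $U^g$ to some limit function $\widetilde\psi \in \mathfrak{L}(U^g)$ (in the single-function sense of \cite[p.\ 61]{Hua}). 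On $U$ this further subsequence still converges to $\psi$, so by uniqueness of locally uniform limits, $\widetilde\psi|_U = \psi$. Thus $\psi$, viewed as the restriction of $\widetilde\psi$, is realized by a limit function of $\{g^n\}$ on $U^g$; identifying limit functions with their natural extensions, $\psi \in \mathfrak{L}(U^g)$. Since $g \in G_U$ was arbitrary, $\mathfrak{L}(U) \subset \mathfrak{L}(U^g)$ for all $g \in G_U$, which is the claim.

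The main obstacle is a bookkeeping subtlety rather than a deep one: a priori $\psi$ is only defined on $U$, while elements of $\mathfrak{L}(U^g)$ are defined on the larger set $U^g$, so the inclusion $\mathfrak{L}(U) \subset \mathfrak{L}(U^g)$ must be read as ``every $\psi \in \mathfrak{L}(U)$ extends to some element of $\mathfrak{L}(U^g)$,'' and one should make sure the normality-and-subsequence argument genuinely produces such an extension that agrees with $\psi$ on $U$. The key point making this work is that $\{g^n\}$ is itself an admissible sequence in $G$, so the semigroup hypothesis on $\psi$ applies directly to it; after that, normality of $\{g^n\}$ on the Fatou set $F(g) \supset U^g$ and the uniqueness of locally uniform limits do the rest. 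No estimates are needed.
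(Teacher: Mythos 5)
Your proposal is correct and follows essentially the same route as the paper: apply the definition of $\mathfrak L(U)$ to the particular sequence $\{g^n\}$, then use normality of $\{g^n\}$ on $U^g\subset F(g)$ to upgrade the convergence from $U$ to all of $U^g$. Your extra care about passing to a further subsequence and identifying $\psi$ with the restriction of the limit on $U^g$ is a slightly more explicit version of the paper's one-line appeal to normality (which implicitly uses the Vitali--Porter theorem), but the argument is the same.
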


\begin{proof}
Suppose that $\psi\in\mathfrak L(U)$. Then every sequence  in $G$ has a subsequence which converges locally uniformly on $U$ to $\psi.$ In particular, for $g\in G_U,$ the sequence $\{g^n\}$ has a subsequence, say $\{g^{n_i}\}$ which converges locally uniformly on $U$ to $\psi.$ By normality, $\{g^{n_i}\}$ converges  locally uniformly on $U^g$ to $\psi$ and therefore, $\psi\in\mathfrak L(U^g)$.
\end{proof}

\begin{theorem}\label{sec2,thma}
Let $G=[g_1,g_2,\ldots]$ be a transcendental semigroup and $U\subset F(G)$ be an invariant component. If there exist a constant limit function $\ze,$ then either $\ze$ is a fixed point of $G$ or $\ze=\ity.$
\end{theorem}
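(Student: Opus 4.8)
The plan is to dispose of the case $\ze=\ity$ immediately, since that is one of the two asserted alternatives, and then, assuming $\ze\in\C$, to show that $g(\ze)=\ze$ for every $g\in G$. Fix an arbitrary $g\in G$ and a base point $z_0\in U$. Because $\ze$ is a constant limit function of $G$ on $U$ in the sense of Definition \ref{sec2,defn1}, the sequence $\{g^{n}\}_{n\in\N}$, being a sequence in $G$, has a subsequence $\{g^{n_k}\}$ converging locally uniformly on $U$ to $\ze$; in particular $g^{n_k}(z_0)\to\ze$ in $\C$.

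The second step uses the continuity of $g$ at the \emph{finite} point $\ze$ (here is exactly where we need to have excluded $\ze=\ity$, as a transcendental entire map does not extend continuously to $\ity$): from $g^{n_k}(z_0)\to\ze$ we obtain
\[
g^{n_k+1}(z_0)=g\big(g^{n_k}(z_0)\big)\longrightarrow g(\ze).
\]
On the other hand $\{g^{n_k+1}\}_{k}$ is again a sequence in $G$, so by the defining property of the constant limit function $\ze$ it has a further subsequence converging locally uniformly on $U$ to $\ze$, whence a subsequence of $g^{n_k+1}(z_0)$ tends to $\ze$. Comparing the two limits at $z_0$ and invoking uniqueness of limits in $\C$ gives $g(\ze)=\ze$. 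As $g\in G$ was arbitrary, $\ze$ is a fixed point of $G$, which is the remaining conclusion.

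A couple of remarks on the delicate points. First, the argument works verbatim whether $\ze\in U$ or $\ze\in\partial U$, so no separate treatment of a boundary limit value is needed. Second, the only genuine bookkeeping issue is to apply the hypothesis on the constant limit function to the \emph{shifted} sequence $\{g^{n_k+1}\}$ rather than reusing the one for $\{g^{n_k}\}$; everything else is continuity plus uniqueness of limits. I expect this to be the main (mild) obstacle, and I do not anticipate any real difficulty beyond it. As an alternative one could invoke invariance of $U$ through Lemma \ref{sec2,lem2}: since $G_U=G$, $\ze$ is also a constant limit function of $\{g^n\}$ on the invariant component $U^g\subset F(g)$ containing $U$, and then the classical single-function statement that a finite constant limit function on an invariant Fatou component is a fixed point yields the same conclusion; but the direct argument above is self-contained and shorter.
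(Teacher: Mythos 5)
Your proof is correct and follows essentially the same route as the paper's: fix $g\in G$, extract a subsequence $g^{n_k}(z_0)\to\ze$, apply $g$ and use continuity at the finite point $\ze$ to get $g(\ze)=\ze$. You are in fact slightly more careful than the paper at the one delicate step, since the paper simply writes $\lim g(g^{n_i}(z))=\ze$ without comment, whereas you justify it by applying the definition of a limit function to the shifted sequence $\{g^{n_k+1}\}$ and invoking uniqueness of limits.
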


\begin{proof}
Let $\ity\neq\ze\in\mathfrak L(U)$ be a constant limit function of $G.$ Then every sequence  in $G$ has a subsequence which converges locally uniformly on $U$ to $\ze.$ In particular, for all $g\in G,$ the sequence $\{g^n\}$ has a subsequence $\{g^{n_i}\}$ such that for $z\in U,$ $\lim g^{n_i}(z)=\ze.$ Now $g(\ze)=g(\lim g^{n_i}(z))=\lim g(g^{n_i}(z))=\ze.$ Thus $\ze$ is a fixed point of $g$ for all $g\in G$ and hence is a fixed point of $G.$
\end{proof}

The proof of next result is motivated from the following lemma
\begin{lemma}\cite[Lemma 4.3]{Hua}\label{sec2,lemx''}
Let $f$ be a transcendental entire function and $U$ be a component of $F(f).$ Then $\mathfrak L(U)$ does not contain any repelling fixed point of $f.$
\end{lemma}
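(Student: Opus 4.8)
The plan is a proof by contradiction. Suppose $\zeta\in\mathfrak{L}(U)$ is a repelling fixed point, so $f(\zeta)=\zeta$ and $\lambda:=f'(\zeta)$ has $|\lambda|>1$, and fix a subsequence $\{f^{n_k}\}$ converging locally uniformly on $U$ to the constant $\zeta$. I would begin with three elementary remarks. First, a repelling fixed point lies in $J(f)$ while $F(f)$ is forward invariant, so the forward orbit of any $z_0\in U$ avoids $\zeta$, and one may moreover choose $z_0\in U$ which is a critical point of no iterate $f^n$ (the exceptional set is countable while $U$ is not), so that $(f^n)'(z_0)\neq0$ for all $n$. Second, since $f$ is continuous and fixes $\zeta$, from $f^{n_k}\to\zeta$ one gets $f^{n_k+j}\to\zeta$ locally uniformly on $U$ for every fixed $j\ge0$. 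Third, fixing $r>0$ so small that $f$ is injective on $\overline{D(\zeta,r)}$ with $c:=\min_{\overline{D(\zeta,r)}}|f'|>1$, the map $f$ expands distances to $\zeta$ by a factor $\ge c$ on $\overline{D(\zeta,r)}$, so no orbit started in $D(\zeta,r)\setminus\{\zeta\}$ can remain in $\overline{D(\zeta,r)}$; combined with the second remark, the first exit time $T_k$ of the orbit of $f^{n_k}(z_0)$ from $\overline{D(\zeta,r)}$ satisfies $T_k\to\infty$.

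When $U$ is periodic or pre-periodic this closes the argument at once. The classification of periodic Fatou components shows that the only finite constant limit function on such a component is the attracting or parabolic (pre-)periodic point, and that along the \emph{whole} sequence $f^n$ converges to it; calling it $p$ and, after replacing $f$ by the appropriate iterate, assuming $f(p)=p$, Weierstrass' theorem gives $(f^n)'(z_0)\to0$, while $(f^{n+1})'(z_0)/(f^n)'(z_0)=f'(f^n(z_0))\to f'(p)$, so $|f'(p)|>1$ would force $|(f^n)'(z_0)|$ to grow geometrically for large $n$, a contradiction. Hence the only candidate finite constant limit value there is non-repelling, and in particular $\zeta$ cannot be repelling.

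The substantive, and to my mind hardest, case is when $U$ is a wandering domain, where the limit is attained only along a subsequence. The natural strategy is to make the preceding derivative estimate quantitative: the orbit of a fixed small disk $\overline{D(z_0,\rho)}\subset U$ spends $\asymp T_k\to\infty$ steps inside $\overline{D(\zeta,r)}$, where the chain rule multiplies $|(f^{\,\cdot})'(z_0)|$ by a factor $\ge c^{T_k}$, while a Schwarz--Pick comparison keeps $f^{n_k}(\overline{D(z_0,\rho)})$ inside a subset of the Fatou component $U_{n_k}$ of diameter comparable to $|f^{n_k}(z_0)-\zeta|$ (which also bounds $\operatorname{dist}(f^{n_k}(z_0),\partial U_{n_k})$), and one then tries to confront the resulting Cauchy estimate for $(f^{n_k+\lfloor T_k/2\rfloor})'(z_0)$ with that chain-rule lower bound. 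The obstacle — the point where a genuinely different idea is needed — is that on each sojourn near $\zeta$ the orbit enters at a distance from $\zeta$ that shrinks in proportion, so both estimates tend to $0$ in step and yield no contradiction on their own. One must use in addition that $\zeta$ is a \emph{fixed} repelling point, with a fixed linearizing neighbourhood and an attracting inverse branch $g$ there, and that $f^{-1}$ carries essentially no branching over a punctured neighbourhood of $\zeta$ away from the postsingular set, so that the transition maps between the shrinking neighbourhoods $f^{n_k}(\overline{D(z_0,\rho)})$ are almost conformal — which contradicts the wandering of $U$ via hyperbolic contraction. This is the mechanism underlying the known description of the finite constant limit functions of a wandering domain, and arranging it to exclude a repelling fixed point is, I expect, the heart of the proof.
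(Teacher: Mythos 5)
Your proposal does not complete the proof: the wandering-domain case is left as a sketch, and that is precisely where the content of the lemma lies. For a periodic or pre-periodic component your argument is fine --- the classification of periodic components shows that a finite constant limit function there is an attracting or parabolic periodic point, whose multiplier has modulus at most $1$, so it cannot be repelling. But the lemma is stated for an arbitrary component $U$ of $F(f)$, and on a wandering domain the convergence $f^{n_k}\to\zeta$ holds only along a subsequence, so no classification is available. You correctly observe that the naive quantitative scheme (chain-rule expansion by $c^{T_k}$ against a Schwarz--Pick/Cauchy bound on $\operatorname{diam} f^{n_k}(\overline{D(z_0,\rho)})$) collapses because both quantities shrink in proportion, and you then say that the genuine mechanism --- inverse branches near $\zeta$, absence of branching away from the postsingular set, hyperbolic contraction --- ``is, I expect, the heart of the proof.'' Expecting the heart of the proof is not supplying it: as written, the argument excludes a repelling fixed point only on (pre-)periodic components, while for wandering domains nothing is actually proved. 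Note that the wandering case is not vacuous --- arbitrary points of the derived set of the postsingular orbit can occur as constant limit functions on wandering domains, so one must genuinely exploit that $\zeta$ is a repelling \emph{fixed} point, exactly the step you defer.

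For comparison: the paper itself does not prove this lemma but imports it from Hua--Yang, and its nearest in-house argument (the semigroup analogue, Theorem \ref{sec2,thmb}) is a short expansion argument on a neighbourhood $V$ of the repelling point, applied to an \emph{invariant} component; it asserts the inequality $|g(g^{n_k}(z))-w|<|g^{n_k}(z)-w|$ without justification and in any case does not address the subsequential/wandering difficulty you isolate. So your diagnosis of where the difficulty sits is sound, and your preliminary reductions (the orbit of $z_0$ avoids $\zeta$ since $\zeta\in J(f)$ and $F(f)$ is forward invariant; $f^{n_k+j}\to\zeta$ for each fixed $j$; the exit times $T_k\to\infty$) are all correct, but the proof has a genuine gap in the only hard case.
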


\begin{theorem}\label{sec2,thmb}
Let $G=[g_1,g_2,\ldots]$ be a transcendental semigroup and $U\subset F(G)$ be an invariant component. Then $\mathfrak L(U)$ does not contain any repelling fixed point of $G_U.$
\end{theorem}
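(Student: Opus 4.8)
The plan is to reduce the semigroup statement to the single-function case, Lemma \ref{sec2,lemx''}, using the stabilizer structure and Lemma \ref{sec2,lem2}. First I would let $w_0$ be a repelling fixed point of $G_U$; by definition this means $w_0$ is a fixed point of every $g\in G_U$ with $|g'(w_0)|>1$ for all such $g$ (in particular $G_U$ is nonempty, which I would note is implicit in the statement since $U$ is invariant so at least the relevant generators lie in $G_U$). Suppose for contradiction that $w_0\in\mathfrak L(U)$. Then by Definition \ref{sec2,defn1}, every sequence in $G$ — in particular the iterate sequence $\{g^n\}$ for a fixed $g\in G_U$ — has a subsequence converging locally uniformly on $U$ to the constant $w_0$.

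The key step is then to transfer this to the Fatou set of the single function $g$. By the classification of periodic components used in Lemma \ref{sec2,lem2}, for $g\in G_U$ there is an invariant component $U^g\subset F(g)$ with $U\subset U^g$, and the argument of Lemma \ref{sec2,lem2} shows that the subsequence $\{g^{n_i}\}$ converging to $w_0$ on $U$ in fact converges locally uniformly on all of $U^g$ to $w_0$ (normality of $\{g^n\}$ on $U^g$ forces the limit to agree with the one already identified on the open subset $U$). Hence $w_0\in\mathfrak L(U^g)$ in the classical sense of \cite[p.\ 61]{Hua}. But $w_0$ is a repelling fixed point of $g$, so Lemma \ref{sec2,lemx''} applied to the single transcendental entire function $g$ and its Fatou component $U^g$ gives that $\mathfrak L(U^g)$ cannot contain $w_0$ — a contradiction. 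Therefore $\mathfrak L(U)$ contains no repelling fixed point of $G_U$.

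I expect the only delicate point to be the bookkeeping around $G_U$: one must make sure that "$w_0$ is a repelling fixed point of $G_U$" is interpreted so that there genuinely exists some $g\in G_U$ for which Lemma \ref{sec2,lemx''} applies, i.e. $w_0$ is a repelling fixed point of that individual $g$. Once a single such $g\in G_U$ is fixed, the rest is the routine normal-families propagation from $U$ to $U^g$ exactly as in Lemma \ref{sec2,lem2}, followed by a direct citation of Lemma \ref{sec2,lemx''}. There is no new analytic content beyond those two ingredients; the main obstacle is purely the careful formulation of the stabilizer hypothesis and ensuring $G_U\neq\emptyset$, which follows from invariance of $U$ together with $J(G)=\overline{\bigcup_{g\in G}J(g)}$ as used earlier in the paper.
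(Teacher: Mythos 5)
Your proof is correct and follows essentially the same route as the paper: both reduce to the single-function setting by fixing some $h\in G_U$ for which $w_0$ is a repelling fixed point, and both use Lemma \ref{sec2,lem2} to pass from $\mathfrak L(U)$ to $\mathfrak L(U^h)$. The only difference is that you then cite Lemma \ref{sec2,lemx''} as a black box, whereas the paper reproves that lemma inline via the expansion inequality $|h(z)-w|\geq\lambda|z-w|$ near $w$; your shortcut is legitimate and, if anything, cleaner.
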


\begin{proof}
Suppose that $w\in\mathfrak L(U)$ is a repelling fixed point of $G_U.$ Then $w$ is a repelling fixed point of some $h\in G_U.$ For all $g\in G_U,\, U\subset U^g,$ where $U^g\subset F(g)$ is an invariant component containing $U.$ Since $w$ is repelling, we can choose $\la>1$ such that $|h'(w)|>\la,$ and a neighborhood $V$ of $w$ such that 
\begin{equation}\label{sec2,eq1}
|h(z)-w|=|h(z)-h(w)|\geq\la|z-w|
\end{equation}
for $z\in V.$  Using Lemma \ref{sec2,lem2}, $w\in\mathfrak L(U^g)$ for $g\in G_U.$ So for $z\in U\subset U^g,$ there exist a sequence $\{n_k\}\to\ity$ such that $\lim g^{n_k}(z)=w.$ Also there exist $N>0$ such that $g^{n_k}(z)\in V$ for all $n_k\geq N$, and  $|g(g^{n_k}(z)-w)|<|g^{n_k}(z)-w|.$ This contradicts (\ref{sec2,eq1}) and hence the result. 
\end{proof}

The next result provides a criterion for an invariant component of a transcendental semigroup to be a Siegel disk.

\begin{theorem}\label{sec2,thm7}
Let $G=[g_1,g_2,\ldots]$ be a transcendental semigroup and $U\subset F(G)$ be an invariant component. If $\mathfrak L(U)$ contains some non constant limit function, then $U$ is a Siegel disk.
\end{theorem}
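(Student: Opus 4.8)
The plan is to reduce the statement to the single-function case by passing, via the invariance of $U$ under the stabilizer $G_U$, to an invariant Fatou component $U^g \subset F(g)$ for a well-chosen $g \in G_U$, and then to invoke the classification of periodic Fatou components of a single transcendental entire function together with the behaviour of limit functions there. First I would pick a non-constant $\psi \in \mathfrak L(U)$. By Definition \ref{sec2,defn1}, every sequence in $G$ — in particular the iterate sequence $\{g^n\}$ for each $g \in G_U$ — has a subsequence converging locally uniformly on $U$ to $\psi$. Fix any $g \in G_U$ and let $U^g \subset F(g)$ be the invariant component of $F(g)$ containing $U$ (this exists by the classification of periodic components of $F(G)$ used earlier in the paper). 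By normality of $\{g^n\}$ on $U^g$, the subsequence converging to $\psi$ on $U$ in fact converges locally uniformly on all of $U^g$ to a (necessarily holomorphic, non-constant since it is non-constant on the subdomain $U$) limit function $\widetilde\psi \in \mathfrak L(U^g)$, extending $\psi$.

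Next I would apply the standard classification of invariant components of $F(g)$ for a transcendental entire function $g$: $U^g$ is an attracting basin, a parabolic basin, a Siegel disk, or a Baker domain. The presence of a non-constant limit function on $U^g$ rules out three of these four possibilities. In an attracting or parabolic basin, every limit function of $\{g^n\}$ is the constant equal to the attracting/parabolic fixed point (on the boundary); in a Baker domain every limit function is the constant $\ity$. Hence $U^g$ must be a Siegel disk, i.e. $g|_{U^g}$ is conformally conjugate to an irrational rotation of a disk. This holds for every $g \in G_U$, so each generator's relevant component is a rotation domain.

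The remaining — and most delicate — step is to transfer "$U^g$ is a Siegel disk for every $g \in G_U$" back to "$U$ itself is a Siegel disk of $G$". Here I would argue that since $U \subset U^g$ and $g|_{U^g}$ is an irrational rotation in suitable coordinates, the family $\{g^n|_U\}$ is precompact with all limit functions non-constant automorphisms; doing this simultaneously for all $g \in G_U$ and using that $U$ is $G_U$-invariant, one shows that $G_U|_U$ acts as a family of conformal automorphisms of $U$ (each $g$ being injective on $U$ because it is injective on the larger Siegel disk $U^g$), so $U$ carries the structure of a Siegel disk for the semigroup in the sense used in this paper. I expect the main obstacle to be precisely this last transfer: one must rule out that $U$ is a proper subdomain of $U^g$ on which the dynamics degenerates, and one must check that the notion of "Siegel disk for $G$" (presumably: $U$ is simply connected, $\partial U$ meets $J(G)$, and $G_U$ acts by rotations on $U$) is genuinely satisfied — the compatibility of the rotation coordinates coming from different generators $g \in G_U$, which forces $G_U$ to be abelian in its action on $U$, is the point that needs the most care.
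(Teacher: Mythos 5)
Your proposal follows essentially the same route as the paper: pass to the invariant component $U^g\subset F(g)$ for $g\in G_U$, extend the non-constant limit function to $U^g$ by normality (this is exactly Lemma \ref{sec2,lem2}), invoke the single-function result that an invariant component carrying a non-constant limit function must be a Siegel disk, and transfer back to $U$. The final step you flag as delicate is handled in the paper simply by appealing to the classification of periodic components of $F(G)$ in \cite{dinesh1}, where a Siegel disk of $G$ is defined precisely via the components $U^g$ being Siegel disks for all $g\in G_U$, so no further rotation-compatibility argument is carried out there.
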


\begin{proof}
For all $g\in G_U, U\subset U^g,$ where $U^g\subset F(g)$ is an invariant component containing $U.$ Let $\phi\in\mathfrak L(U)$ be a non constant limit function. Then $\phi\in\mathfrak L(U^g), g\in G_U$ using  Lemma \ref{sec2,lem2}.  Also from \cite[p.\ 64]{Hua}, $U^g$ is a Siegel disk for all $g\in G_U,$ and hence from classification of periodic components of $F(G)$ in \cite{dinesh1}, $U\subset F(G)$ is a Siegel disk.
\end{proof}

Recall that the postsingular set of an entire function $f$ is defined as 
\[\mathcal P(f)=\overline{\B(\bigcup_{n\geq 0}f^n(\text{Sing}(f^{-1}))\B)}.\]

 For a transcendental semigroup $G$, let 
\[
\mathcal P(G)=\overline{\B(\bigcup_{f\in G}{\text{Sing}}(f^{-1})\B)}.
\]
The following result rules out the existence of non constant limit functions in any component of $F(G).$ We first prove a lemma

\begin{lemma}\label{sec2,lemx}
 For a transcendental semigroup $G=[g_1,g_2,\ldots], \mathcal P(G)= \overline{\B(\bigcup_{g\in G}\mathcal P(g)\B)}.$
\end{lemma}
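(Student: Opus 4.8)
The plan is to prove the set identity $\mathcal P(G)= \overline{\bigl(\bigcup_{g\in G}\mathcal P(g)\bigr)}$ by establishing both inclusions, using the facts that (i) $\mathcal P(G)$ is by definition the closure of $\bigcup_{f\in G}\text{Sing}(f^{-1})$, (ii) each $\mathcal P(g)=\overline{\bigl(\bigcup_{n\ge 0}g^n(\text{Sing}(g^{-1}))\bigr)}$, and (iii) $G$ is closed under composition, so every iterate $g^n$ lies in $G$ and the singular set behaves well under composition (if $w=h(g(z))$ with $(h\circ g)'(z)=0$ then either $g'(z)=0$, making $g(z)$ a critical point contributing $w\in g(\text{Sing}(g^{-1}))\subset$ stuff, or $h'(g(z))=0$, making $g(z)$ a critical point of $h$; similarly for asymptotic values). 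First I would record the standard composition fact $\text{Sing}((h\circ g)^{-1})\subset h(\text{Sing}(g^{-1}))\cup\text{Sing}(h^{-1})$, which is exactly what underlies the closedness of class $\mathcal B$ under composition quoted earlier in the paper.

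For the inclusion $\overline{\bigl(\bigcup_{g\in G}\mathcal P(g)\bigr)}\subseteq\mathcal P(G)$, I would fix $g\in G$ and show $\mathcal P(g)\subseteq\mathcal P(G)$. A point of $\bigcup_{n\ge 0}g^n(\text{Sing}(g^{-1}))$ either lies in $\text{Sing}(g^{-1})\subset\mathcal P(G)$ (the $n=0$ term), or is of the form $g^n(s)$ with $s\in\text{Sing}(g^{-1})$ and $n\ge 1$; writing $g^n=g^{n-1}\circ g$ and iterating the composition formula, one checks $g^n(\text{Sing}(g^{-1}))\subseteq\bigcup_{k}g^k(\text{Sing}(g^{-1}))\subseteq\text{Sing}((g^{n+1})^{-1})\cup\bigcup_{j\le n}g^j(\cdots)$; more cleanly, $g^n(s)$ is a singular value of $g^{n+1}=g\circ g^n$ (since $s$ being critical or asymptotic for $g$ forces $g^n(s)$ to be critical or asymptotic for $g\circ g^n$ — the chain rule gives a critical point, or a curve to $\infty$ maps to a curve to $g^n(s)$ only after we note $g^n$ is entire), hence $g^n(s)\in\text{Sing}((g^{n+1})^{-1})\subseteq\bigcup_{f\in G}\text{Sing}(f^{-1})$, so $g^n(s)\in\mathcal P(G)$. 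Taking closures, $\mathcal P(g)\subseteq\mathcal P(G)$, then the union over $g$ and a final closure (using that $\mathcal P(G)$ is already closed) gives the inclusion.

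For the reverse inclusion $\mathcal P(G)\subseteq\overline{\bigl(\bigcup_{g\in G}\mathcal P(g)\bigr)}$, this is the easy direction: for each $f\in G$ we have $\text{Sing}(f^{-1})\subseteq\mathcal P(f)$ directly from the definition of $\mathcal P(f)$ (it is the $n=0$ term inside the closure), so $\bigcup_{f\in G}\text{Sing}(f^{-1})\subseteq\bigcup_{f\in G}\mathcal P(f)\subseteq\overline{\bigl(\bigcup_{g\in G}\mathcal P(g)\bigr)}$, and since the right side is closed, taking the closure of the left side preserves the inclusion. Combining the two inclusions yields the claimed equality.

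I expect the main obstacle to be the careful verification that $g^n(\text{Sing}(g^{-1}))\subseteq\bigcup_{f\in G}\text{Sing}(f^{-1})$ — that is, that forward images of singular values under iterates are again singular values of some function in the semigroup. For critical values this is immediate from the chain rule (a critical point of $g$ composed with anything on the left stays critical), but for asymptotic values one must be slightly careful: if $\Gamma$ is a curve tending to $\infty$ with $g(z)\to\zeta$ along $\Gamma$, then to realize $g^n(\zeta)$ as an asymptotic value of $g^{n}\circ g$ one needs $g^{n}$ to be continuous at $\zeta$ (it is, being entire) so that $g^n(g(z))\to g^n(\zeta)$ along $\Gamma$; thus $g^n(\zeta)\in AV(g^{n+1})\subseteq\text{Sing}((g^{n+1})^{-1})$. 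Once this bookkeeping is pinned down, the rest is routine manipulation of closures, using that a closed set containing a set contains its closure. I would also remark that the identity shows $F(G)\cap\mathcal P(G)^c\subseteq\bigcap_{g\in G}\bigl(F(g)\cap\mathcal P(g)^c\bigr)$, which is presumably how the lemma feeds into the subsequent theorem on non-constant limit functions.
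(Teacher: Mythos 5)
Your proof is correct and follows essentially the same route as the paper: the easy direction comes from $\mathrm{Sing}(f^{-1})$ being the $n=0$ term of $\mathcal P(f)$, and the other direction rests on the containment $g^n(\mathrm{Sing}(g^{-1}))\subset\mathrm{Sing}((g^{n+1})^{-1})$, which the paper simply cites from Baker while you verify it directly via the chain rule and continuity. The only cosmetic difference is that the paper splits the second direction into a two-case argument (points of the union versus their limit points), which your ``take closures at the end'' phrasing handles implicitly.
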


\begin{proof} 
We first show that $\mathcal P(G)\subset \overline{\B(\bigcup_{g\in G}\mathcal P(g)\B)}.$ For this, let $w\in \B(\bigcup_{g\in G}{\text{Sing}}(g^{-1})\B).$ Then $w\in \text{Sing}(g^{-1})$ for some $g\in G.$ This implies that $w\in\mathcal P(g)$ for some $g\in G$ and so $w\in\overline{\B(\bigcup_{g\in G}\mathcal P(g)\B)}$ which proves the first part. For the backward implication, let $z\in\B(\bigcup_{g\in G}\mathcal P(g)\B).$ Then $z\in\mathcal P(g)$ for some $g\in G.$ We now have two cases to consider.  In the first case, let $z\in\B(\bigcup_{n\geq 0}g^n(\text{Sing}(g^{-1}))\B).$ Then $z\in g^n(\text{Sing}(g^{-1}))$ for some $n\geq 0.$ From \cite{baker1}, we have  $g^n(\text{Sing}(g^{-1}))\subset \text{Sing}(g^{n+1})^{-1}$ which further is contained in $\mathcal P(G).$ Thus we get    
$\B(\bigcup_{n\geq 0}g^n(\text{Sing}(g^{-1}))\B)\subset\mathcal P(G).$  In the second case, let $z\in \B(\bigcup_{n\geq 0}g^n(\text{Sing}(g^{-1}))\B)^{\prime}.$ Then there exist a sequence $\{z_n\}\subset \B(\bigcup_{n\geq 0}g^n(\text{Sing}(g^{-1}))\B)$ such that $\lim z_n=z.$ It can be  easily seen that $\{z_n\}\subset \mathcal P(G)$ which implies that $z\in\mathcal P(G).$ So we have $\B(\bigcup_{n\geq 0}g^n(\text{Sing}(g^{-1}))\B)^{\prime}\subset\mathcal P(G).$ Combining the two cases we get the backward implication and this completes the proof of the lemma.
\end{proof}

Example \ref{sec2,eg1} with restricted values of parameter $\la$  can also be used to illustrate Lemma \ref{sec2,lemx}

\begin{example}\label{sec2,eg2}
Let $f=e^{\la z},\,0<\la<\frac{1}{e},$ $g=f+\frac{2\pi i}{\la},$ and  $G=[f,g].$
 Clearly, 
Sing$(f^{-1})=\{0\}$ and Sing$(g^{-1})=\B\{\frac{2\pi i}{\la}\B\}.$ As in Example \ref{sec2,eg1}, for any $h\in G,$ either $h=f^k$ for some $k\in\N,$ or $h=f^q+\frac{2\pi i}{\la}=g^q$ for some $q\in\N.$ It can be easily seen that $\mathcal P(g)=\mathcal P(f)+\frac{2\pi i}{\la}.$ Also from  \cite{baker1}, $\mathcal P(f^l)=\mathcal P(f)$ for all $l\in\N.$ Consequently, we have
\begin{equation}
\begin{split}\notag
\overline{\B(\bigcup_{h\in G}\mathcal P(h)\B)}
&=\overline{\B(\bigcup_{k=1}^{\ity}\B(\mathcal P(f^k)\bigcup\mathcal P\B(f^k+\frac{2\pi i}{\la}\B)\B)\B)}\\
&=\overline{\B(\bigcup_{k=1}^{\ity}\mathcal P(f^k)\B)}\bigcup\overline{\B(\bigcup_{k=1}^{\ity}\mathcal P\B(f^k+\frac{2\pi i}{\la}\B)\B)}\\
&=\mathcal P(f)\bigcup\B(\mathcal P(f)+\frac{2\pi i}{\la}\B).
\end{split}
\end{equation}
Also
\begin{equation}
\begin{split}\notag
\mathcal P(G)
&=\overline{\B(\bigcup_{h\in G}{\text{Sing}}(h^{-1})\B)}\\
&=\overline{\bigcup_{k=1}^{\ity}\B(\text{Sing}(f^k)^{-1}\bigcup\text{Sing}\B(f^k+\frac{2\pi i}{\la}\B)^{-1}\B)}\\
&=\overline{\bigcup_{k=1}^{\ity}\B(\bigcup_{s=0}^{k-1}f^s(\text{Sing}(f^{-1})\B)}\bigcup\overline{\bigcup_{k=1}^{\ity}\B(\bigcup_{s=0}^{k-1}f^s(\text{Sing}(f^{-1}))+\frac{2\pi i}{\la}\B)}\\
&=\mathcal P(f)\bigcup\B(\mathcal P(f)+\frac{2\pi i}{\la}\B).
\end{split}
\end{equation}
Hence $\mathcal P(G)=\overline{\B(\bigcup_{h\in G}\mathcal P(h)\B)}.$
\end{example}

\begin{theorem}\label{sec2,thm8}
Let  $G=[g_1,\ldots, g_n]$  be a finitely generated  transcendental semigroup. Suppose that $\mathcal P(G)$ has an empty interior and a connected complement and for all $g\in G,$ $\mathcal P(g)$ has a connected complement. Then any sequence  in $G$ has no subsequence having a non constant limit function in any component of $F(G).$
\end{theorem}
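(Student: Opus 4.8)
The plan is to reduce the assertion to the corresponding fact for a single transcendental entire function: if the postsingular set of such a function is nowhere dense with connected complement, then the function has no Siegel disk, and hence, by the classification of limit functions of its iterates, every limit function on every Fatou component is constant \cite{Hua}. The hypotheses are precisely what is needed to make this applicable to every member of $G$, so the remaining task is to show that a non-constant limit function of a sequence in $G$ would force the existence of such a Siegel disk for one of the generators.

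First I would verify that the single-function hypotheses pass to every $g\in G$. As in the proof of Lemma~\ref{sec2,lemx}, $\text{Sing}(g^{-1})\subset\bigcup_{f\in G}\text{Sing}(f^{-1})\subset\mathcal P(G)$, and for $n\geq 1$ one has $g^{n}(\text{Sing}(g^{-1}))\subset\text{Sing}((g^{n+1})^{-1})\subset\mathcal P(G)$ since $g^{n+1}\in G$ \cite{baker1}; taking closures gives $\mathcal P(g)\subset\mathcal P(G)$. Hence $\mathcal P(g)$ has empty interior and, by hypothesis, connected complement. If $g$ had a Siegel disk $S$, then $S$ would be bounded and simply connected with $\partial S\subset\mathcal P(g)$; choosing $p\in S\setminus\mathcal P(g)$ and a point $q$ in the unbounded component of $\C\setminus\overline S$ with $q\notin\mathcal P(g)$ (possible since $\mathcal P(g)$ is nowhere dense), every arc joining $p$ and $q$ would meet $\partial S\subset\mathcal P(g)$, so $\C\setminus\mathcal P(g)$ would be disconnected, a contradiction. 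As a transcendental entire function has no Herman ring, the classification of limit functions then shows that every limit function of $\{g^{n}\}$ on a component of $F(g)$ is constant, for every $g\in G$.

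Now suppose, for a contradiction, that a sequence $\{h_k\}$ in $G$ has a subsequence $\{h_{k_j}\}$ converging locally uniformly on a component $U$ of $F(G)$ to a non-constant holomorphic $\psi$. Using the backward invariance of $J(G)$ (and that $J(G)$ has no isolated points) one checks that $\psi(U)$ is contained in a single component $V$ of $F(G)$ and that $h_{k_j}(U)\subset V$ for all large $j$. By Theorem~\ref{sec2,thmc} the component $U$ can be neither multiply connected nor a Baker domain, since in either case $U\subset I(G)$ and the definition of $I(G)$ forces a subsequence of $\{h_{k_j}\}$ to diverge to $\ity$ at each point of $U$, contradicting $\psi$ being holomorphic; thus, by Remark~\ref{sec2,rem1'}, $U$ is simply connected and is either pre-periodic or wandering. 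If $U$ is pre-periodic, its (finite) orbit under $G$ contains a $G_W$-invariant component $W$; transferring the non-constant limit function through the finitely many generators linking $U$ to $W$ and invoking Theorem~\ref{sec2,thm7} together with the classification of periodic components of $F(G)$ in \cite{dinesh1}, one concludes that $W$ must be a Siegel disk of $G$ (a Baker domain would lie in $I(G)$ and give the constant limit $\ity$, and an attracting or parabolic basin would give a constant limit). But then $W\subset U^{g}$ with $U^{g}$ a Siegel disk of $g$ for every $g\in G_W$, contradicting the second paragraph.

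The main obstacle is this last step, in two respects: the case of a simply connected wandering component $U$, where the invariant-component machinery of \cite{dinesh1} and Theorem~\ref{sec2,thm7} is not directly available; and, even in the pre-periodic case, the transfer of the non-constant limit function from $U$ to the invariant component $W$, since Theorem~\ref{sec2,thm7} as stated requires the limit to lie in $\mathfrak L(W)$, i.e.\ to be a subsequential limit of \emph{every} sequence in $G$. I expect both to be dealt with by exploiting finite generation: writing each $h_{k_j}$ as a word $g_{c_1}\circ\cdots\circ g_{c_{m_j}}\circ(\text{tail})$ and using the pigeonhole principle to peel generators off from the outside while retaining a non-constant limit function, one should be able to confine the relevant dynamics to a periodic cycle of $F(G)$-components and so reduce to iterates of a single $g\in G$, to which the second paragraph applies. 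Carrying out this peeling so that it actually terminates is, I expect, the crux of the argument.
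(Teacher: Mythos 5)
Your first two paragraphs carry out essentially the reduction that the paper itself uses: show $\mathcal P(g)\subset\mathcal P(G)$ so that each $\mathcal P(g)$ is nowhere dense with connected complement, and then invoke the single-function fact that such a $g$ admits no non-constant limit functions of $\{g^n\}$ on $F(g)$. (The paper simply cites Baker \cite{baker1} for this fact; your rederivation via the exclusion of Siegel disks, using $\partial S\subset\mathcal P(g)$ and a separation argument, is the standard proof of that citation, so this is the same route.) The paper then finishes in one line: it assumes the non-constant limit function lives on an \emph{invariant} component $U$, applies Lemma \ref{sec2,lem2} to place it in $\mathfrak L(U^g)$, and contradicts the single-function statement. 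That is exactly the pre-periodic/invariant branch of your third paragraph.

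The gap is the one you name yourself and do not close: the theorem asserts the conclusion for \emph{any} component of $F(G)$ and for a subsequential limit of an \emph{arbitrary} sequence $\{h_k\}$ in $G$, whereas the machinery available (Lemma \ref{sec2,lem2}, Theorem \ref{sec2,thm7}, and the single-function result for $\{g^n\}$) applies only to invariant components and only to functions lying in $\mathfrak L(U)$ in the strong sense of Definition \ref{sec2,defn1}, i.e.\ subsequential limits of \emph{every} sequence in $G$. A limit of one particular sequence $\{h_{k_j}\}$ on a wandering (or merely pre-periodic) component is not covered, and your proposed fix --- peeling generators off the words $h_{k_j}$ by pigeonhole until the dynamics is confined to a cycle and reduces to iterates of a single $g$ --- is announced but not executed; as written it is not clear that the peeling terminates or that non-constancy survives the peeling. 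So the proposal does not prove the stated theorem. For calibration: the paper's own proof does not close this gap either --- it silently restricts to invariant components and treats the given limit function as an element of $\mathfrak L(U)$ --- so the portion of the argument you completed coincides with what the paper actually proves, and the portion you flagged as the crux is absent from the paper as well.
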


\begin{proof}
$\mathcal P(G)=\overline{\B(\bigcup_{g\in G}\mathcal P(g)\B)}$ from Lemma \ref{sec2,lemx} and since $\mathcal P(G)$ has an empty interior so does $\mathcal P(g)$ for all $g\in G.$  From \cite{baker1}, no subsequence of $\{g^n\}$ can have a non constant limit function in any component of $F(g)$ for all $g\in G.$ Suppose that a sequence $\{h_n\}$ in $G$ has a subsequence which has a non constant limit function $\phi$ in some invariant component  $U\subset F(G).$ As $\mathfrak L(U)\subset\mathfrak L(U^g),$ where 
for all $g\in G_U, U^g\subset F(g)$ is an invariant component containing $U,$ we have $\phi\in\mathfrak L(U^g)$ which is a contradiction and hence the result.
\end{proof}

The next result is a generalization of Baker's result \cite{baker1} on the location of constant limit functions.

\begin{theorem}\label{sec2,thm10}
For a   transcendental semigroup  $G=[g_1,g_2,\ldots]$, any constant limit function of $G$ on an invariant component of $F(G)$ lies in   $\mathcal P(G)\cup\{\ity\}.$
\end{theorem}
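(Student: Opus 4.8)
The plan is to reduce the semigroup statement to the single-function case by exploiting the stabilizer $G_U$ and the inclusion of limit-function sets established in Lemma \ref{sec2,lem2}. Let $U\subset F(G)$ be an invariant component and let $\ze$ be a constant limit function of $G$ on $U$. First I would dispose of the trivial case $\ze=\ity$, which is explicitly allowed by the statement. So assume $\ze\in\C$. By Theorem \ref{sec2,thma}, $\ze$ is then a fixed point of $G$, hence in particular a fixed point of every $g\in G_U$. Fix any $g\in G_U$ and let $U^g\subset F(g)$ be the invariant component of $F(g)$ containing $U$ (this exists by the classification of periodic components referenced throughout Section 4). Since $\ze$ is a constant limit function of $G$ on $U$, the sequence $\{g^n\}$ has a subsequence converging locally uniformly on $U$ to $\ze$; by normality this subsequence converges locally uniformly on all of $U^g$ to $\ze$, so $\ze\in\mathfrak L(U^g)$ as a \emph{constant} limit function of the single function $g$.

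Now I would invoke Baker's theorem \cite{baker1}: for a transcendental entire function $g$, any finite constant limit function of $\{g^n\}$ on a component of $F(g)$ lies in the postsingular set $\mathcal P(g)$. Applying this to $g$ and $U^g$ gives $\ze\in\mathcal P(g)$. Since $\mathcal P(g)\subset\overline{\bigl(\bigcup_{h\in G}\mathcal P(h)\bigr)}=\mathcal P(G)$ by Lemma \ref{sec2,lemx}, we conclude $\ze\in\mathcal P(G)\subset\mathcal P(G)\cup\{\ity\}$. Combining with the case $\ze=\ity$ completes the argument.

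The one point requiring care — and the likely main obstacle — is the passage ``$\ze$ is a constant limit function of $G$ on $U$ $\Rightarrow$ $\ze$ is a constant limit function of the iterates $\{g^n\}$ of a \emph{single} $g$ on $U^g$.'' The definition of limit function of $G$ (Definition \ref{sec2,defn1}) says every sequence in $G$ has a subsequence converging to $\ze$; one must note that $\{g^n\}_{n\in\N}$ is itself a sequence in $G$, extract the convergent subsequence, and then propagate the convergence from $U$ to $U^g$ by Montel normality (exactly as in the proof of Lemma \ref{sec2,lem2}, whose statement I would cite). A secondary subtlety is ensuring $G_U\neq\emptyset$: since $U$ is invariant, the classification of periodic components of $F(G)$ from \cite{dinesh1} guarantees that the relevant stabilizing elements exist, so we may genuinely pick such a $g$. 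With those two observations in place the proof is just assembly of Theorem \ref{sec2,thma}, Lemma \ref{sec2,lem2}, Baker's theorem, and Lemma \ref{sec2,lemx}.
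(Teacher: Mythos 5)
Your proposal is correct and follows essentially the same route as the paper: transfer the constant limit function from $U$ to the component $U^g\subset F(g)$ via Lemma \ref{sec2,lem2}, apply Baker's result \cite{baker1} to place it in $\mathcal P(g)\cup\{\ity\}$, and conclude via $\mathcal P(g)\subset\mathcal P(G)$. The detour through Theorem \ref{sec2,thma} (the fixed-point observation) is harmless but unnecessary; otherwise the argument matches the paper's.
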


\begin{proof}
Suppose that $U\subset F(G)$ be an invariant component and  $\xi\in\mathfrak L(U)$ be a constant limit function of $G.$ From Lemma \ref{sec2,lem2}, $\mathfrak L(U)\subset\mathfrak L(U^g),$ where  for all $g\in G_U,$ $U^g\subset F(g)$ is an invariant component containing $U.$ Also from \cite{baker1}, $\xi\in\mathcal P(g)\cup\{\ity\},$ for all $g\in G_U$ and as $\mathcal P(g)\subset\mathcal P(G)$ for all $g\in G,$ we obtain $\xi\in\mathcal P(G)\cup\{\ity\}.$
\end{proof}

Acknowledgement. The author's appreciate  the referee's valuable comments.

\end{document}